\pgfplotsset{compat=1.10}
\newsavebox{\measure@tikzpicture}
  \def\tikz@width{#1}%
\apptocmd{\thebibliography}{\fontsize{11}{15}\selectfont}{}{}%
\tikzset{
    state/.style={
           rectangle,
           rounded corners,
           draw=black, very thick,
           minimum height=2em,
           inner sep=2pt,
           text centered,
           },
}
\theoremstyle{plain}
\newtheorem{theorem}{Theorem}[section]
\newtheorem{lemma}[theorem]{Lemma}
\newtheorem{proposition}[theorem]{Proposition}
\newtheorem{remark}[theorem]{Remark}
\theoremstyle{definition}
\theoremstyle{remark}
\numberwithin{equation}{section}
\newcommand{\E}{\mathcal{E}}
\newcommand{\abs}[1]{\left\lvert#1\right\rvert}
\newcommand{\tens}[1]{\mathsf{#1}}
\newcommand{\n}{\mathbf{n}}
\newcommand{\R}{\mathbb{R}}
\newcommand{\N}{\mathbb{N}}
\newcommand{\vect}[1]{\boldsymbol{#1}}
\theoremstyle{plain}
\newtheorem*{theorem*}{Theorem}
\newtheorem*{corollary*}{Corollary}
\theoremstyle{definition}
\newtheorem*{notation*}{Notation}
\numberwithin{figure}{section}
\definecolor{myred}{rgb}{0.9,0,0}
\definecolor{vargreen}{rgb}{0.0, 0.5, 0.0}
\renewcommand{\rho}{\varrho}
\renewcommand{\theta}{\vartheta}
\newcommand{\eps}{\varepsilon}
\begin{document}
\title{\textsc{A variational analysis of nematic axisymmetric films:\\
the covariant derivative case}}
\author{\textsc{G.\ Bevilacqua}$^1$\thanks{\href{mailto:giulia.bevilacqua@dm.unipi.it}{\texttt{giulia.bevilacqua@dm.unipi.it}}}\,\,\,$-$\,\, \textsc{C.\ Lonati}$^2$\thanks{\href{mailto:chiara.lonati@polito.it}{\texttt{chiara.lonati@polito.it}}}\,\,\,$-$\,\, \textsc{L.\ Lussardi}$^2$\thanks{\href{mailto:luca.lussardi@polito.it}{
\texttt{luca.lussardi@polito.it}}}\,\,\,$-$\,\,\textsc{A.\  Marzocchi}$^3$\thanks{\href{mailto:alfredo.marzocchi@unicatt.it}{
\texttt{alfredo.marzocchi@unicatt.it}}}
\bigskip\\
\normalsize$^1$ Dipartimento di Matematica, Università di Pisa, Largo Bruno Pontecorvo 5, I–56127 Pisa, Italy.\\
\normalsize$^2$  DISMA "Giuseppe Luigi Lagrange", Politecnico di Torino, c.so Duca degli Abruzzi 24, I-10129 Torino, Italy.\\
\normalsize$^3$ Dipartimento di Matematica e Fisica ``N. Tartaglia", Università Cattolica del Sacro Cuore,\\
\normalsize via della Garzetta 48, I-25133 Brescia, Italy\\
}
\date{}

\maketitle

\begin{abstract}
\noindent
Nematic surfaces are thin fluid structures, ideally two-dimensional, endowed with an in-plane nematic order. In 2012, two variational models have been introduced by Giomi \cite{giomi2012hyperbolic} and by Napoli and Vergori \cite{napoli2012surface, napoli2018influence}. Both penalize the area of the surface and the gradient of the director: in \cite{giomi2012hyperbolic} the covariant derivative of the director is considered, while \cite{napoli2018influence} deals with the surface gradient. In this paper, a complete variational analysis of the model proposed by Giomi is performed for revolution surfaces spanning two coaxial rings.
\end{abstract}

\bigskip
\bigskip

\textbf{Mathematics Subject Classification (2020)}: 49J05, 49Q10, 76A15.

\textbf{Keywords}: thin films, nematic surfaces, one-dimensional variational problems.

\bigskip

\tableofcontents

\section{Introduction}

Motivated by the Oseen-Frank theory for nematic liquid crystals \cite{lin1991static, Virga1994}, in this paper we consider the energy functional
\begin{equation}
\label{eq:funzionale_generale}
    \mathcal{E}(\n, S)=\int_S \left(\gamma+\frac{\kappa}{2}|\nabla \n|^2\right)dA,
\end{equation}
where $S$ is the surface in whose tangent space the unit nematic vector $\n$ is constrained to lie, $\gamma$ is the constant surface tension and $\kappa$ is the nematic constant.
Such a surface $S$ has a prescribed boundary curve and $\nabla \n$ is a gradient of $\n$ as specified later. 
Notice that when $\kappa=0$, the problem \eqref{eq:funzionale_generale} reduces to the classical Plateau problem (see \cite{david2014chapter} and for more recent developments \cite{giusteri2017solution, bevilacqua2019soap, bevilacqua2020dimensional, palmer2021minimal}).

We stress that, in our case, the domain of the nematic vector is not assigned \emph{a priori} as in the classical theory but $\bf n$
is constrained to lie on a free surface. Understanding the optimal shape of the surface and their properties is clearly of paramount interest for possible industrial applications, like flexible screens and lenses. The interesting aspect is the competition between the area term, penalized by constant surface tension, and the nematic energy which prefers flat surfaces, namely the director $\n$ may be constant. Such a coupling results in some difficulties in the minimization process: one has not only to provide the orientation of the nematic vector $\n$ but as the direction of $\n$ varies, so does the shape of the surface changes and \emph{vice versa}.

In \cite{napoli2012surface, napoli2018influence}, Napoli and Vergori study the energy functional $\mathcal{E}(\n, S)$ when $\nabla=\nabla_S$ is the surface gradient defined as follows (see \cite{gurtin1975continuum}). Let $x \in S$ and let $\pi_x \colon \mathcal N_x\to S$ be the projection on $S$, where $\mathcal N_x$ is a neighborhood of $0$ on $T_xS$. Thus, $\nabla_S\n(x) := \nabla (\n \circ \pi_x)(0)$ and 
$$
|\nabla_S\n|^2 = \abs{\tens{D}\n}^2 + \abs{\tens{L}\n}^2,
$$
where $\tens{D}$ is the covariant derivative and $\tens{L}$ is the extrinsic curvature tensor of the surface $S$. More precisely, in \cite{napoli2018influence}, the case of revolution surfaces spanning two equal coaxial circles of radius $r$ placed at distance $2 h$ is considered. The Authors derive the Euler-Lagrange equations and study them numerically to visualize equilibrium configurations in terms of the parameter
$$
c := \frac{\kappa}{2 \gamma}.
$$
which affects the shape of the solutions (we refer to Figures 3 and 4 of \cite{napoli2018influence}).

Motivated by \cite{napoli2018influence}, we aim to rigorously study the equilibrium configurations and to analytically derive the existence of minimizers. 
However, as a simplification in studying this problem, we neglect the effects of the extrinsic curvature tensor, reducing the surface gradient to the covariant derivative $\tens{D}$ (we leave the case of the complete surface gradient for a subsequent analysis). Thus, our energy functional reads as
\begin{equation*}
    \mathcal{E}(\n, S)=\int_S \left(\gamma+\frac{\kappa}{2}|\tens{D}\n|^2\right)dA,
\end{equation*}
which has to be minimized among all the revolution surfaces spanning two equal coaxial circles of radius $r$ placed at distance $2 h$. The case of the covariant derivative has been investigated by Giomi in \cite{giomi2012hyperbolic}.
\textcolor{black}{In his paper, Giomi computes the first variation of the energy functional $\mathcal{E}$, getting
$$
\delta \mathcal{E} = \int_S \left(\gamma + \frac{\kappa}{2}K\right)\left(-2 H \eps\right)\, dA,
$$
where $K$ is the Gaussian curvature of $S$, $H$ is the mean curvature of $S$ and $\eps$ is a small displacement along normal direction to $S$. Thus, according to that formula, critical points of $\mathcal{E}$ are either minimal surfaces ($H = 0$) or surfaces with negative constant Gaussian curvature. We notice that, surfaces of the above types are not solutions of our Euler-Lagrange equation \eqref{EL}. As a consequence, Giomi's result seems to be in conflict with our Theorem \ref{main_total}.}

In order to find the explicit expression of $\mathcal E$, we parametrize $S$ by the profile curve $\rho\colon [-h,h]\to (0,+\infty)$ subjected to the constraint $\rho(\pm h)=r$, and we write the nematic director $\n=\cos \alpha \vect e_1+\sin \alpha \vect e_2$, where $\vect e_1$ is the direction of the parallel, while $\vect e_2$ is the meridian one. 
Thus, the minimization problem of $\E(\n, S)$ reduces (see Appendix \ref{sec-phy}) to the minimization of the purely geometric energy functional given by
\begin{equation*}
    \mathcal{E}(\rho)=\bigintsss_{-h}^h\left(\rho\sqrt{1+(\rho')^2}+ c\,\frac{(\rho')^2}{\rho\sqrt{1+(\rho')^2}} \right)\,dx.
\end{equation*}
If $c = 0$, then the problem reduces to the very classical problem of minimal surfaces of revolution (see the general reference \cite{Isenberg1978TheSO}). Nevertheless, we will investigate this case in detail since we need to set the problem in $W^{1,1}$ where much less is known about problems with linear growth. 
However, notice that the term 
\[
\frac{(\rho')^2}{\rho\sqrt{1+(\rho')^2}}
\]
is a sort of memory of the presence of the nematic director, which makes the study of $\E(\n, S)$ very interesting and which we are going to precisely describe below.

\subsection{Setting of the problem and main result}

Let $h,r>0$ and let 
$$
X:=\{\rho \in W^{1,1}(-h,h) : \rho> 0,\,\rho(-h)=\rho(h)=r\}.
$$
For all $c\ge 0$, let $\mathcal{E}_c \colon X \to \mathbb R$ be the functional given by 
\begin{equation}
    \label{energy_functional}
    \mathcal{E}_c(\rho)=\bigintsss_{-h}^h\left(\rho\sqrt{1+(\rho')^2}+ c\frac{(\rho')^2}{\rho\sqrt{1+(\rho')^2}}\right)\,dx.\ 
\end{equation}

Our main result concerns existence of minimizers of $\mathcal E_c$ on $X$ and qualitative properties of the minimizers. We now introduce some objects we will need in the statement of the main result. Let $\Xi$ be the unique positive solution of the equation 
\[
\xi \tanh \frac{1}{\xi}+\text{sech}^2\frac{1}{\xi}=\xi 
\]
and let 
\begin{equation}
    \label{eq:def_omega}
    \omega=\frac{1}{\Xi\cosh(1/\Xi)}. 
\end{equation}
An easy analysis shows that 
\begin{equation}\label{stima_omega}
\frac{52}{100}<\omega<\frac{53}{100}.
\end{equation}
This estimate will be useful later.
In what follows we will always assume that
\begin{equation}\label{cond_catenaria}
\frac{h}{r}\le \omega.
\end{equation}

Let $\rho_0 \in X$ be given by 
\begin{equation}
 \label{eq:catenaria}
    \rho_0(x)=\Pi_0\cosh\frac{x}{\Pi_0}.
\end{equation}
where $\Pi_0>0$ is the largest solution of the equation in $\Pi$ given by 
$$
    r = \Pi \cosh\frac{h}{\Pi}.
$$
Then, $\rho_0$ is the unique minimizer of $\E_0$ (see for details Theorem \ref{mainApp}). 

\bigskip

The main result of this paper is the following Theorem.

\begin{theorem}\label{main_total}
Assume \eqref{cond_catenaria}. For every $c\ge 0$ the functional $\mathcal E_c$ admits at least a minimizer. Furthermore, any minimizer $\rho_c$ of $\E_c$ satisfies the following properties:\begin{itemize}
\item[(a)] $\rho_c$ is even.
\item[(b)] $\rho_c$ is strictly convex.
\item[(c)] $\rho_0<\rho_c<r$ on $(-h,h)$ for all $c >0$.
\item[(d)] $\rho_c\to r$ uniformly on $[-h,h]$ as $c\to+\infty$.
\item[(e)] $\rho_c \in C^2([-h,h])$ and $\rho_c$ satisfies the Euler-Lagrange equation
    \begin{equation}\label{EL}
(1+(\rho')^2)((c+\rho^2) (\rho')^2+\rho^2)=\rho\rho'' (\rho^2(\rho')^2+c(2-(\rho')^2)+\rho^2)
    \end{equation}
    which has the first integral 
\begin{equation}
    \label{eq:integrale_primo}
    c\frac{(\rho_c')^2}{\rho_c(1+(\rho_c')^2)^{3/2}}-\frac{\rho_c}{\sqrt{1+(\rho_c')^2}}=-\rho_c(0).
\end{equation}
\end{itemize} 
\end{theorem}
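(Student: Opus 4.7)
My plan is to organize the proof into four phases: (i) existence of a minimizer in $W^{1,1}(-h,h)$ by the direct method; (ii) derivation of the Euler--Lagrange equation, $C^2$ regularity, and the first integral (part (e)); (iii) symmetry and convexity (parts (a) and (b)); (iv) the pointwise bounds and the asymptotic behaviour (parts (c) and (d)). A basic energy bound used throughout is $\mathcal{E}_c(\rho_c) \leq \mathcal{E}_c(r) = 2hr$, obtained by testing with the constant competitor $\rho \equiv r$.

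For existence, I take a minimizing sequence $\{\rho_n\}\subset X$. The area integrand satisfies $\rho_n\sqrt{1+(\rho_n')^2}\geq \rho_n|\rho_n'|$, which, together with the boundary values $\rho_n(\pm h)=r$ and an $L^\infty$ upper bound secured by truncating at a level $M\ge r$ (which does not increase the energy), provides a uniform $BV$ estimate. The delicate point is excluding $\min\rho_n\to 0$: if $\rho_n$ dips close to $0$ in the interior, the boundary data force a large variation over a region of small $\rho_n$, and the nematic term $c(\rho_n')^2/[\rho_n\sqrt{1+(\rho_n')^2}]$ dominates like $c|\rho_n'|/\rho_n$, blowing up the energy and contradicting minimality. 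Once a pointwise lower bound $\rho_n\ge\delta>0$ is in hand, the integrand is smooth and convex in $\rho'$ on the feasible range of slopes (where $\rho^2(1+(\rho')^2)+c(2-(\rho')^2)>0$), so a Reshetnyak-type lower semicontinuity together with weak-$*$ BV compactness delivers a $W^{1,1}$ minimizer with no singular part.

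The Euler--Lagrange equation \eqref{EL} follows from the first variation against $C_c^\infty$ perturbations; a standard bootstrap in the smooth regime $\rho>0$ yields $\rho_c\in C^2([-h,h])$. Since the Lagrangian is autonomous, the Beltrami identity $L-\rho' L_{\rho'}=\mathrm{const}$ reduces by direct computation to \eqref{eq:integrale_primo}, with the constant identified as $\rho_c(0)$ by evaluating at the symmetry point where $\rho_c'(0)=0$. Evenness (a) follows from noting that $\tilde\rho(x):=\rho_c(-x)$ is also a minimizer obeying the same second-order ODE; matching $\rho_c$ and $\tilde\rho$ at the common interior minimum (where both derivatives vanish), uniqueness of the Cauchy problem for \eqref{EL} forces $\rho_c=\tilde\rho$. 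Convexity (b) I read off \eqref{EL} written as $\rho\rho''\cdot A(\rho,\rho')=B(\rho,\rho')$ with $B>0$ manifest; to conclude $\rho''>0$ I rule out sign changes of $A=\rho^2(1+(\rho')^2)+c(2-(\rho')^2)$ by the slope bound $\rho/\sqrt{1+(\rho')^2}\geq \rho_c(0)$ coming from \eqref{eq:integrale_primo}, together with $\rho_c\leq r$.

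Property (c) then splits into $\rho_c<r$, immediate from strict convexity and the boundary data, and $\rho_c>\rho_0$, which I prove by a tangency/contradiction argument on two convex even curves meeting at the boundary: if they touch at some interior $x_0$, the distinct Euler--Lagrange equations (one with $c=0$, one with $c>0$) force inconsistent second-order information at $x_0$; here the hypothesis \eqref{cond_catenaria} enters to ensure $\rho_0$ is well-defined and the catenoid regime is the relevant one. Part (d) follows from
\[
c\int_{-h}^h \frac{(\rho_c')^2}{\rho_c\sqrt{1+(\rho_c')^2}}\,dx \leq 2hr,
\]
which forces the nematic integral to vanish as $c\to\infty$; combined with even convexity and the boundary values, elementary analysis yields the uniform convergence $\rho_c\to r$. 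The hardest step will be the existence phase: the area integrand has only linear growth and is not jointly convex in $(\rho,\rho')$, so the classical Tonelli-type criteria do not apply directly, and preventing $\min\rho_n\to 0$ requires a quantitative use of the nematic singularity together with the boundary data.
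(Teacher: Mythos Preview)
Your existence argument has a genuine gap that cascades through the rest of the proof. The integrand
\[
L(\rho,p)=\rho\sqrt{1+p^2}+c\,\frac{p^2}{\rho\sqrt{1+p^2}}
\]
is \emph{not} convex in $p$: a computation gives $\partial_{pp}L=\bigl[\rho^2(1+p^2)+c(2-p^2)\bigr]/[\rho(1+p^2)^{5/2}]$, which becomes negative once $|p|>\sqrt{2}$ and $c>\rho^2$. You acknowledge this by restricting to a ``feasible range of slopes'', but you never establish that a minimizing sequence stays in that range. Without an a~priori bound on $|\rho_n'|$, Reshetnyak-type lower semicontinuity is unavailable, and the BV relaxation of a nonconvex linear-growth integrand can be strictly below $\E_c$. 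Moreover, your lower bound $\rho_n\ge\delta$ relies on the nematic singularity and therefore says nothing when $c=0$, so the statement ``for every $c\ge0$'' is not covered.

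The paper handles all of this by a completely different mechanism. First it proves (Section~\ref{sec_convex}) that replacing $\rho$ by its convex envelope $\rho^\ast$ decreases $\E_c$; this is a hands-on argument on piecewise affine functions, not a convexity-in-$p$ statement. Second, after a full analysis of the $c=0$ problem producing the catenary $\rho_0$ (Section~\ref{sec:E0}), it proves the comparison $\E_c(\rho\vee\rho_0)\le\E_c(\rho)$ (Proposition~\ref{lemma_tilli}). The combination forces any minimizing sequence to satisfy $\rho_0\le\rho_n\le r$ with $\rho_n$ convex, hence $|\rho_n'|\le|\rho_0'(h)|<z_0<\sqrt{2}$ (Lemma~\ref{lemma_sup}). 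This yields compactness in $W^{1,\infty}$ via Ascoli--Arzel\`a and \emph{strong} $W^{1,1}$ convergence, so continuity replaces lower semicontinuity entirely. The same slope bound $|\rho_c'|<\sqrt{2}$ is what breaks the circularity you fall into in part~(b): you invoke the first integral to control $A=\rho^2(1+(\rho')^2)+c(2-(\rho')^2)$, but deriving the first integral already requires $A\ne0$ to invert $\partial_pL$ and obtain $C^2$ regularity. Finally, your tangency argument for $\rho_c>\rho_0$ only excludes interior tangential contact; it does not rule out transversal crossings. The paper gets $\rho_c\ge\rho_0$ directly from the energy comparison above, and only then runs the tangency contradiction.
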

Numerical simulations of the corresponding revolution surfaces are presented in Figure \ref{fig:3dviews} to show some qualitatively properties stated in Theorem \ref{main_total}.

\begin{figure}[htbp]
\begin{subfigure}{.5\linewidth}
\centering
\includegraphics[width=0.85\textwidth]{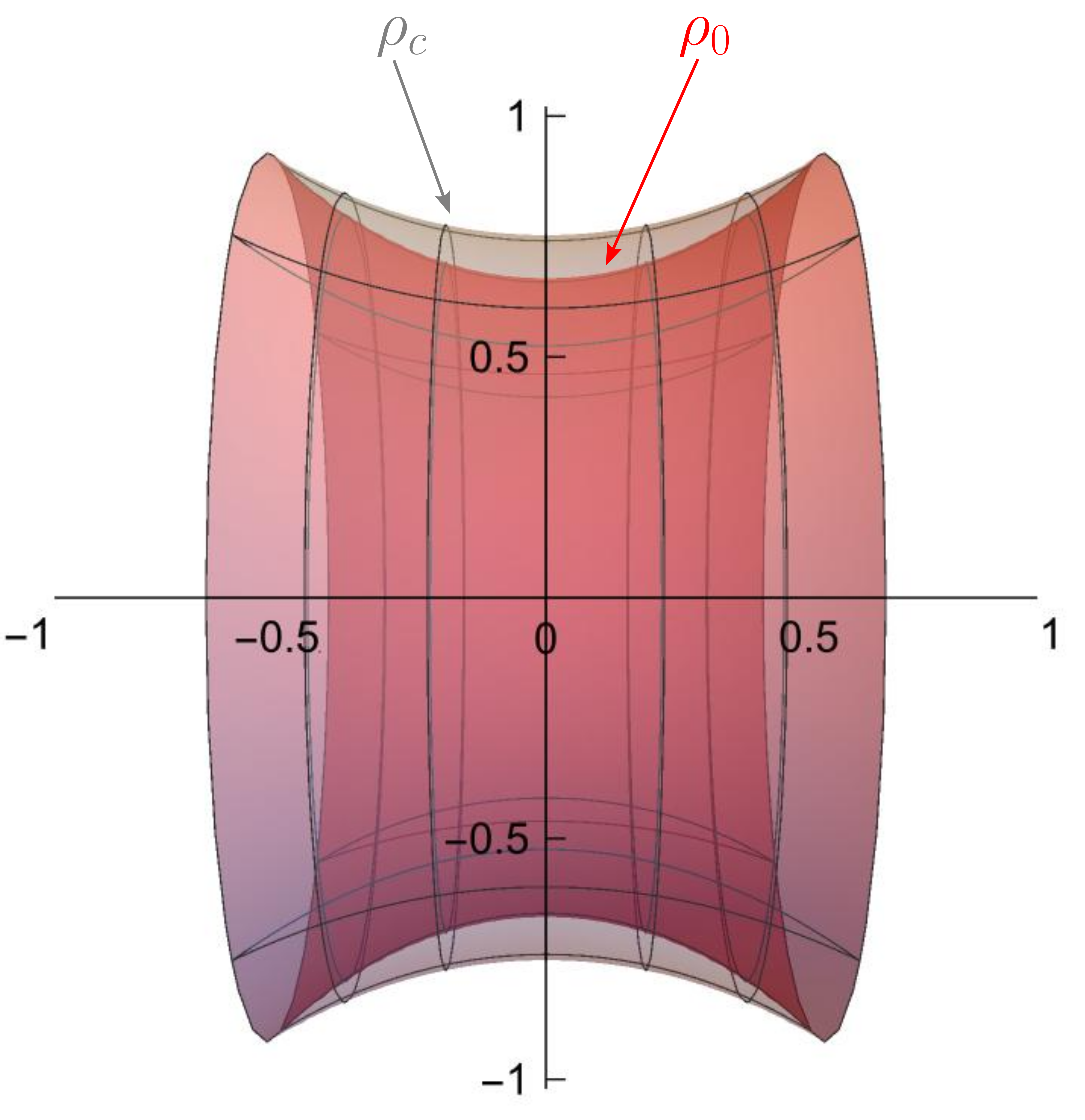}
\caption{Front view.}
\label{3d1}
\end{subfigure}%
\begin{subfigure}{.5\linewidth}
	\centering
 		\includegraphics[width=0.85\textwidth]{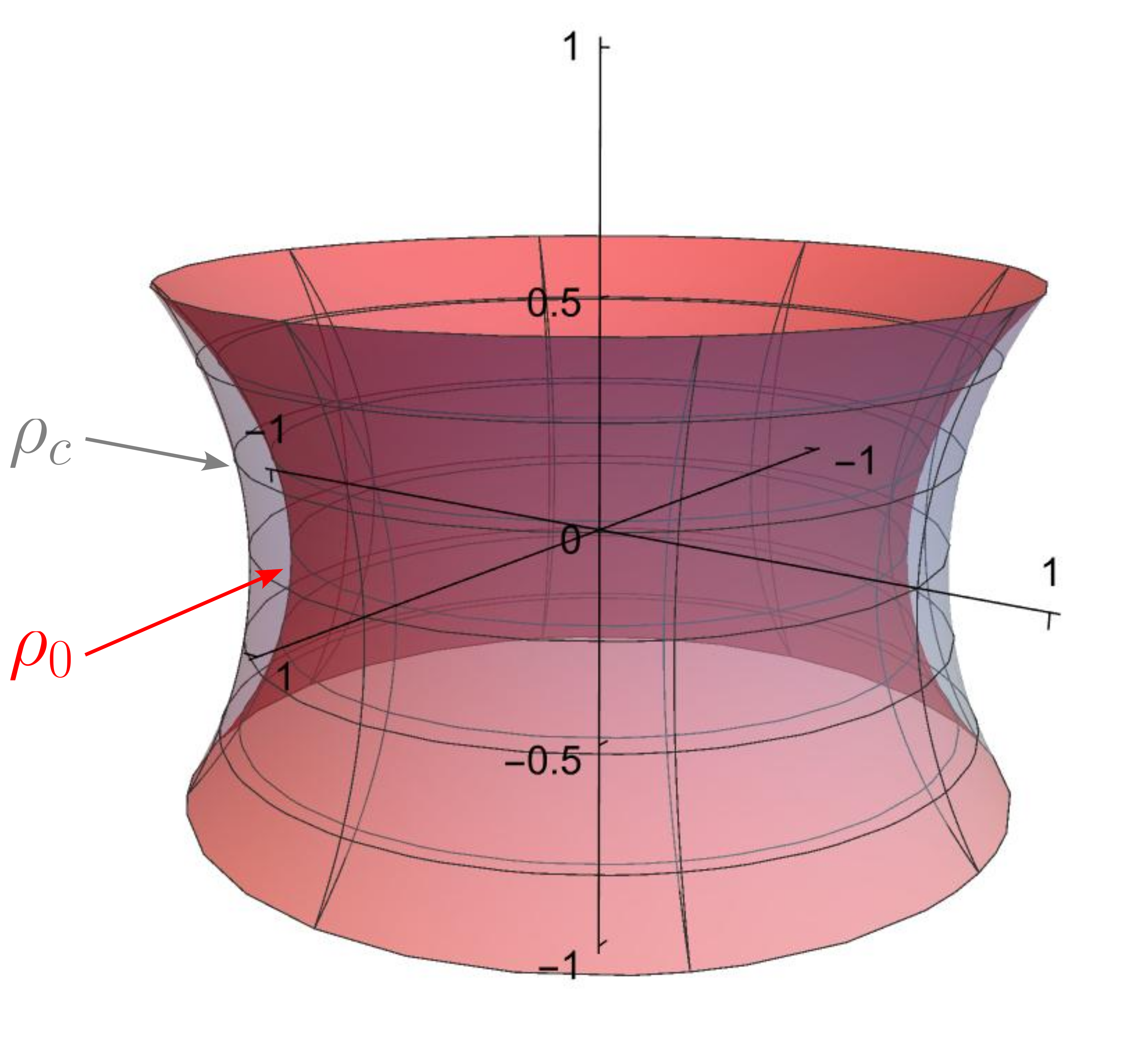}
\caption{3D view.}
\label{3d2}
\end{subfigure}
\caption{Three dimensional plots of $\rho_c$ solution of \eqref{EL} and $\rho_0$ solution of \eqref{eq:catenaria} having chosen $-h=-11/20, h=11/20$ and $r=9/10$. The red surface is the catenoid $\rho_0$ while the grey surface is the minimizer $\rho_c$ corresponding to the choice $c=0.1$.}
\label{fig:3dviews}
\end{figure}

\subsection{Strategy of the proof}
We decided to carry out the variational study in $W^{1,1}$, although the energy density has linear growth in $\rho'$, which 
would have suggested a more complicated treatment using BV functions. 

The proof of existence in $W^{1,1}$ requires, as in \cite{greco2012}, a delicate analysis. A first key step (see Section \ref{sec_convex}) is to show that the energy decreases by convexification: this is proved for piecewise affine functions and then recovered for $W^{1,1}$ functions using a density argument. Since we have a minimizing sequence of convex functions, we can successfully apply Ascoli-Arzel\`a Theorem on every subinterval well contained in $[-h,h]$. The compactness on the whole interval $[-h,h]$ can be recovered whenever one proves that the derivative remains bounded close to the boundary. In order to obtain this last property, we need a complete and careful analysis of $\E_0$ which corresponds to the case of minimal surfaces of revolution. Precisely, we prove that if $h/r$ is small enough (see \eqref{cond_catenaria}), then the catenary $\rho_0$ is the unique minimizer of $\E_0$. When $c>0$, the necessary bound on the derivative holds true since for any $\rho \in X$ the function $\rho \vee \rho_0$ spends less energy. As a consequence, the slope of $\rho$ at the boundary is controlled by the slope of the catenary $\rho_0$. Therefore, the uniform convergence on $[-h,h]$ of a minimizing sequence guarantees that boundary conditions pass to the limit. Moreover, it is easy to see that the energy functional is continuous with respect to the $W^{1,1}$ convergence. Thus, the Direct method of Calculus of Variations ensures the existence of a minimizer $\rho_c$.

Whenever a minimizer exists, we prove several properties of it. The weak form of the Euler-Lagrange equation is derived by a standard first variation argument. The strict convexity of $\rho_c$ follows from the strong form of the Euler-Lagrange equation: it is crucial to have a fine control on $\rho_c'$. This leads to the $C^2$ regularity of $\rho_c$ up to the boundary as well as the derivation of the first integral. In particular, we get the symmetry of $\rho_c$ by means of an ODE argument. Another consequence of the strong form of the Euler-Lagrange equation is the strict inequality $\rho_0<\rho_c$ on $(-h,h)$, deduced by a contradiction argument. Finally, using essentially a $\Gamma$-convergence technique, we get the asymptotics as $c\to+\infty$ proving that $\rho_c$ converges uniformly to a constant function.

\bigskip

The plan of the paper is the following. The form of the energy functional in the revolution surface setting is completely derived in the Appendix \ref{sec-phy}. In Section \ref{sec_convex}, we provide the convexification argument. In Section \ref{sec:E0}, we carry out a complete treatment of $\E_0$, while in Section \ref{sec:esistenza_minimi_Ec} we prove the existence of minimizers for the energy functional $\E_c$. Finally, Section \ref{sec:proprieta_geometriche} is devoted to derive some qualitative properties of minimizers.

\subsection{Further remarks and future investigations}
We run some numerical tests in order to understand better the behaviour of the solutions $\rho_c$, like their dependence on the constant $c$. Indeed, differently from the case $c= 0$, where we know exactly the shape of the unique minimizer by integration, for $c>0$ the first integral \eqref{eq:integrale_primo} cannot be  integrated explicitly. Thus, we numerically solve the Euler-Lagrange equation \eqref{EL} in the regime where \eqref{cond_catenaria} holds true in order to have the catenary $\rho_0$ as the unique minimizer for $c = 0$.
\begin{figure}[htbp]
\begin{subfigure}{.5\linewidth}
	\centering
	\includegraphics[width=0.98\textwidth]{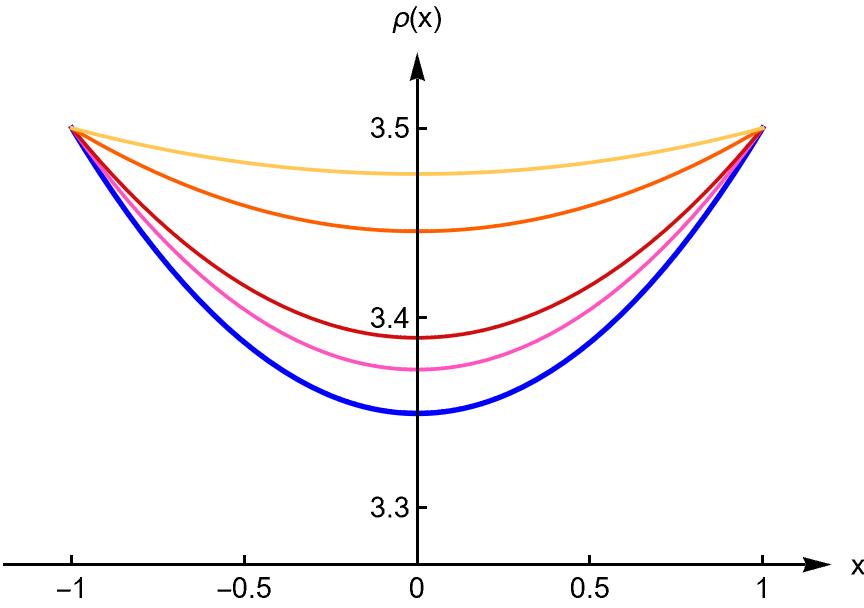}
	\caption{$\rho(1) = \rho(-1) = 7/2$}
	\label{ris1}
\end{subfigure}%
\begin{subfigure}{.5\linewidth}
	\centering
	\includegraphics[width=0.98\textwidth]{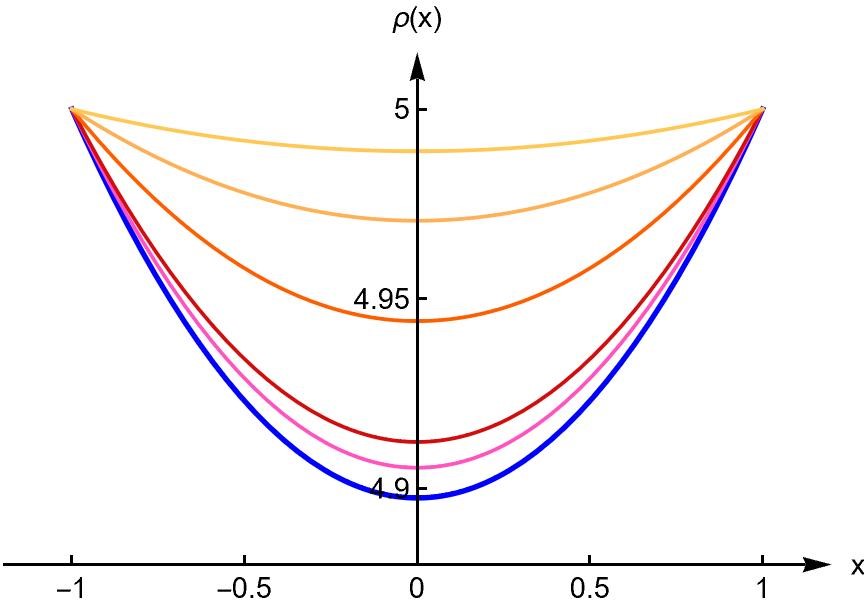}
	\caption{$\rho(1) = \rho(-1) = 5$}
	\label{ris2}
	\end{subfigure}
	\caption{(a) Numerical solutions of \eqref{EL} having chosen $-h= -1$, $h = 1$, as boundary conditions $\rho(1) = \rho(-1) = 7/2$ and $c = 0, 1,2,10,30$. The thicker blue line is the catenary for $c = 0$. (b) Numerical solutions of \eqref{EL} having chosen $-h= -1$, $h = 1$, as boundary conditions $\rho(1) = \rho(-1) = 5$ and $c = 0, 1,2,10,30,100$. The thicker blue line is the catenary for $c = 0$.}
	\label{fig:numerica}
\end{figure}
In Figure \ref{fig:numerica}, we plot numerical solutions $\rho_c$ of \eqref{EL} varying the constant $c \in [0, +\infty)$. As expected from Theorem \ref{main_total}, we notice that the catenary $\rho_0$ (thick blue line in Figure \ref{fig:numerica}) is a barrier from below for all $c >0$.
Moreover, for large values of $c$ (see Figure \ref{ris2}), plots are in accordance with Theorem \ref{main_total}: the solution becomes flatter, confirming that for $c\to +\infty$, the limit $\rho_\infty$ is constant function and the associated revolution surface is a cylinder of radius $r$ and height $2 h$. 

From these numerical plots, we can obtain additional interesting geometrical properties which we are not able to prove rigorously. For instance, directly from Figure \ref{fig:numerica}, we notice that solutions are ordered with respect to the parameter $c$: when $c$ increases, so does $\rho_c(0)$, suggesting a monotonicity of $\rho_c$ with respect to the parameter $c$. Moreover, similarly to what happens for the case $c = 0$, we expect the minimizer to be unique. Precisely, for all $c >0$, there might be two families of solutions of the first integral: one is close to the stable catenary $\rho_0$ and it is what numerical results should find, while the other one should be close to the unstable catenary $\rho_1$.

In our analysis we do not consider any boundary condition on $\n$, namely on the angle $\alpha$. In this regard, notice that if we require that $\alpha$ satisfies some boundary conditions, then the derivatives of $\alpha$ cannot be neglected in \eqref{eq:simpl_funzionale}. This aspect could be an interesting future investigation. An ongoing project is the rigorous proof of the results of Napoli and Vergori \cite{napoli2018influence} where the covariant derivative is replaced by the surface gradient in the setting of revolution surfaces.

\section{A convexification argument}\label{sec_convex}

In what follows $\rho^\ast$ denotes the convex envelope of $\rho$, namely the greatest convex function lower than $\rho$. We observe that by construction $\rho^\ast$ remains in $X$ whenever $\rho \in X$. The main result of this section is the following Proposition.

\begin{proposition}\label{lemmaconvexity-affine_con}
For any $c \geq 0$ and for any $\rho\in X$, we have 
\begin{equation}\label{conv0}
\mathcal E_c(\rho^\ast)\le \mathcal E_c(\rho).
\end{equation}
\end{proposition}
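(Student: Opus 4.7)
The plan is to prove \eqref{conv0} in two stages: first establish the inequality when $\rho$ is piecewise affine, then extend to all $\rho\in X$ by density. For the piecewise affine case the key observation is that the convex envelope $\rho^*$ of a piecewise affine function can be constructed by iteratively removing every interior breakpoint whose value lies strictly above the chord through its two neighbours. Each such removal is a local operation replacing two affine pieces by one; if each such single removal decreases $\mathcal E_c$, induction on the number of breakpoints yields the full inequality.

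The local step amounts to the following. Given three consecutive vertices $P_0=(x_0,y_0)$, $P_1=(x_1,y_1)$, $P_2=(x_2,y_2)$ of the graph of $\rho$ with $y_1$ strictly above the chord value at $x_1$, and writing $\ell$ for the affine chord from $P_0$ to $P_2$, I need
\begin{equation*}
\int_{x_0}^{x_1} f(\rho,\rho')\,dx+\int_{x_1}^{x_2} f(\rho,\rho')\,dx\ge\int_{x_0}^{x_2}f(\ell,\ell')\,dx,
\end{equation*}
where $f(r,p)=r\sqrt{1+p^2}+cp^2/(r\sqrt{1+p^2})$ is the integrand of $\mathcal E_c$. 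I would split into the \emph{area term} and the \emph{nematic term}. For the area part, the solid of revolution $K_\rho=\{(x,y,z):y^2+z^2\le\rho(x)^2,\,x_0\le x\le x_2\}$ is convex (the ``tent'' profile being concave over $[x_0,x_2]$) and strictly contains the frustum $K_\ell$; the two bodies share the same end discs at $x=x_0$ and $x=x_2$, so Cauchy's classical monotonicity of surface area for nested convex bodies gives the desired inequality up to the factor $2\pi$. For the nematic part, on each affine segment of horizontal length $L$ with endpoint heights $\alpha,\beta$ the integral evaluates explicitly to $c\,q\,\log(\beta/\alpha)$ with $q=(\beta-\alpha)/\sqrt{L^2+(\beta-\alpha)^2}$, and the required inequality would follow by direct algebraic manipulation. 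A cleaner unified alternative is to regard the difference of the two sides as a smooth function of the peak height $y_1$: it vanishes when $y_1$ equals the chord value at $x_1$, and one shows that its derivative in $y_1$ is nonnegative for $y_1$ above this value.

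For the density step, I would approximate $\rho\in X$ by the piecewise affine interpolants $\rho_n$ on equispaced nodes; these converge to $\rho$ uniformly and in $W^{1,1}$. A uniform lower bound on $\rho$ away from zero together with dominated convergence yields $\mathcal E_c(\rho_n)\to\mathcal E_c(\rho)$, and the continuity of the convex envelope under uniform convergence gives $\rho_n^*\to\rho^*$ uniformly. Combining the piecewise affine inequality $\mathcal E_c(\rho_n^*)\le\mathcal E_c(\rho_n)$ with the lower semicontinuity of $\mathcal E_c$ (which follows from the convexity of $f$ in $p$) yields \eqref{conv0} in the limit. The technical heart of the argument is the local three-vertex inequality, and in particular the nematic contribution: a clean separate handling of the two parts appears to succeed only under mild symmetry assumptions, so a joint monotonicity argument in the peak height $y_1$ is likely to be the most robust route.
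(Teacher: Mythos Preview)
Your two-stage plan (piecewise affine first, density second) matches the paper's, and your iterative ``remove the peak above its chord'' reduction to a single three-vertex inequality is correct. For the area part your Cauchy monotonicity argument is a clean geometric alternative to the paper's bare-hands computation in Lemma~\ref{lemmaconvexity-affine}. For the nematic part your hesitation is unwarranted: the paper's Lemma~\ref{lemmaconvexity-affine1} handles it separately with the same reduction scheme as the area term, so no joint monotonicity in the peak height is needed (though that would be an interesting alternative).

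The genuine gap is in your density step. You invoke lower semicontinuity of $\mathcal E_c$ ``which follows from the convexity of $f$ in $p$'', but $f(r,p)=r\sqrt{1+p^2}+\dfrac{c}{r}\dfrac{p^2}{\sqrt{1+p^2}}$ is \emph{not} convex in $p$: the second term has $\partial_p^2\bigl(p^2/\sqrt{1+p^2}\bigr)=(2-p^2)/(1+p^2)^{5/2}$, and a short computation gives $\partial_p^2 f<0$ whenever $c>r^2$ and $p^2>(r^2+2c)/(c-r^2)$. So lower semicontinuity along merely uniformly convergent sequences is not available. The paper circumvents this by exploiting that the $\rho_n^\ast$ are \emph{convex}: Lemma~\ref{lemma_conv} then gives $(\rho_n^\ast)'\to(\rho^\ast)'$ strongly in $L^1$, and Lemma~\ref{lemma_cont} gives \emph{continuity} (not just lower semicontinuity) of both integrands under $L^1$ convergence of the derivatives, so one may simply pass to the limit in $\mathcal E_c(\rho_n^\ast)\le\mathcal E_c(\rho_n)$. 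Replace your lower semicontinuity appeal with this convexity-of-the-envelopes argument and the proof goes through.
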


To prove Proposition \ref{lemmaconvexity-affine_con}, we first consider piecewise affine functions. Precisely, we only show the validity of \eqref{conv0} in a particular context represented in Figure \ref{fig:piecewise}.
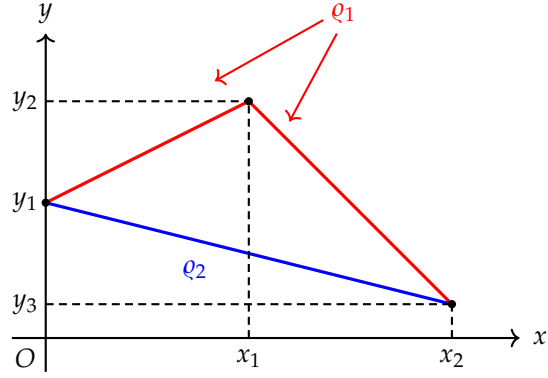
\begin{figure}[htbp]
		\centering
		\begin{tikzpicture}[rotate=0, scale= 0.9]
\coordinate (O) at (0,0);
\draw[thick, color=black,->] (-0.5,0) -- (7,0);
\draw[thick, color=black,->] (0,-0.5) -- (0,4.5);
\draw[very thick, color=red] (0,2) -- (3,3.5);
\draw[very thick, color=red] (3,3.5) -- (6,0.5);
\draw[very thick, color=blue] (0,2) -- (6,0.5);
\draw[thick, densely dashed, color=black] (3,0) -- (3,3.5);
\draw[thick, densely dashed, color=black] (6,0) -- (6,0.5);
\draw[thick, densely dashed, color=black] (0,0.5) -- (6,0.5);
\draw[thick, densely dashed, color=black] (0,3.5) -- (3,3.5);
\filldraw [black] (0,2) circle (1.5pt);
\filldraw [black] (3,3.5) circle (1.5pt);
\filldraw [black] (6,0.5) circle (1.5pt);
\draw node at (2.2,1) {\textcolor{blue}{$\rho_2$}};
\draw node at (-0.3,-0.3) {\textcolor{black}{$O$}};
\draw node at (0,4.8) {\textcolor{black}{$y$}};
\draw node at (7.3,0) {\textcolor{black}{$x$}};
\draw node at (6,-0.3) {\textcolor{black}{$x_2$}};
\draw node at (3,-0.3) {\textcolor{black}{$x_1$}};
\draw node at (-0.3,2) {\textcolor{black}{$y_1$}};
\draw node at (-0.3,3.5) {\textcolor{black}{$y_2$}};
\draw node at (-0.3,0.5) {\textcolor{black}{$y_3$}};
\draw[thick, color=red,<-] (3.6,3.2) -- (4.3,4.5);
\draw[thick, color=red,<-] (2.5,3.8) -- (4.1,4.7);
\draw node at (4.4,4.8) {\textcolor{red}{$\rho_1$}};
\end{tikzpicture}
	\caption{Graphical representation of the two piecewise affine functions $\rho_1$ and $\rho_2$ defined in \eqref{eq:rho1} and \eqref{eq:rho2}.}
	\label{fig:piecewise}
\end{figure}
All the other cases can be easily treated in the same way using reflection arguments. Let $x_2>0$ and $\rho\in W^{1,1}(0,x_2)$, we introduce 
\[
\begin{aligned}
    &\mathcal A(\rho)=\int_0^{x_2}\rho\sqrt{1+(\rho')^2}\,dx, \qquad &&\mathcal N(\rho)=\int_0^{x_2}\frac{(\rho')^2}{\rho\sqrt{1+(\rho')^2}}\,dx.
\end{aligned}
\]
Fix $x_1\in (0,x_2)$, $y_1,y_2,y_3\in \R$ with $y_3<y_1\le y_2$ and $y_1\ge 0$. Let $\rho_1,\rho_2 \colon [0,x_2]\to \R$ be two functions respectively given by 
\begin{equation}
    \label{eq:rho1}
    \rho_1(x)=\left\{
    \begin{array}{ll}
\displaystyle \frac{y_2-y_1}{x_1}x+y_1 & \text{if $x\in [0,x_1]$}\\
\\
\displaystyle\frac{y_3-y_2}{x_2-x_1}(x-x_1)+y_2 & \text{if $x\in (x_1,x_2]$}
\end{array}\right.
\end{equation}
and 
\begin{equation}
    \label{eq:rho2}
    \rho_2(x)=\frac{y_3-y_1}{x_2}x+y_1.
\end{equation}

\begin{lemma}\label{lemmaconvexity-affine}
Using the same notations as above, it holds 
\begin{equation}\label{area_conv}
\mathcal A(\rho_2)\le \mathcal A(\rho_1).
\end{equation}
\end{lemma}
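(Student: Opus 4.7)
The strategy is to exploit the piecewise affine structure of both $\rho_1$ and $\rho_2$ to obtain closed form expressions for $\mathcal A(\rho_1)$ and $\mathcal A(\rho_2)$, and then reduce the inequality to an elementary planar fact that combines the triangle inequality with a monotonicity observation on the coefficients. Denote $P_0=(0,y_1)$, $P_1=(x_1,y_2)$, $P_2=(x_2,y_3)$, and write $\ell_1=|P_0P_1|$, $\ell_2=|P_1P_2|$, $\ell=|P_0P_2|$.

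First, on any affine segment $\rho(x)=\alpha x+\beta$ on $[a,b]$, the derivative $\rho'=\alpha$ is constant, so a direct integration yields
\[
\int_a^b\rho\sqrt{1+(\rho')^2}\,dx=\sqrt{1+\alpha^2}\,(b-a)\,\frac{\rho(a)+\rho(b)}{2}=\frac{\rho(a)+\rho(b)}{2}\sqrt{(b-a)^2+(\rho(b)-\rho(a))^2},
\]
namely the average of the endpoint values times the Euclidean length of the segment (this is Pappus's centroid theorem for the lateral area of a frustum of revolution). Applying this identity piecewise gives
\[
\mathcal A(\rho_1)=\frac{y_1+y_2}{2}\,\ell_1+\frac{y_2+y_3}{2}\,\ell_2,\qquad \mathcal A(\rho_2)=\frac{y_1+y_3}{2}\,\ell.
\]

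The proof is then completed in two moves. The triangle inequality applied to the triangle $P_0P_1P_2$ in the plane gives $\ell\le\ell_1+\ell_2$. Moreover, since $y_3<y_1\le y_2$, the value $y_2$ is the largest among $y_1,y_2,y_3$, so
\[
\frac{y_1+y_3}{2}\le\frac{y_1+y_2}{2},\qquad \frac{y_1+y_3}{2}\le\frac{y_2+y_3}{2}.
\]
Combining,
\[
\mathcal A(\rho_2)=\frac{y_1+y_3}{2}\,\ell\le\frac{y_1+y_3}{2}(\ell_1+\ell_2)\le\frac{y_1+y_2}{2}\ell_1+\frac{y_2+y_3}{2}\ell_2=\mathcal A(\rho_1),
\]
which is \eqref{area_conv}.

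I do not expect any genuine obstacle: the assumption $y_3<y_1\le y_2$ is exactly what forces $y_2$ to dominate, giving the correct ordering of the weights, while the triangle inequality supplies the length bound in the right direction. The hypothesis $y_1\ge 0$ plays no role in this affine comparison and will presumably enter only later, either in the density step extending the inequality to $W^{1,1}$ or when the nematic term is added, in order to guarantee admissibility (positivity) of the resulting profile.
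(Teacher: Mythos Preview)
Your closed-form computation of $\mathcal A$ on affine pieces is correct, and the geometric reading via Pappus is nice. However, there is a genuine sign gap in your chain of inequalities. The hypotheses give only $y_3<y_1\le y_2$ and $y_1\ge 0$; nothing forces $y_3\ge 0$, and in fact the paper's own Step~1 treats the case $y_1=y_2=0$, $y_3<0$. When $y_1+y_3<0$, multiplying the triangle inequality $\ell\le \ell_1+\ell_2$ by the negative coefficient $\tfrac{y_1+y_3}{2}$ \emph{reverses} the inequality, so your first move
\[
\frac{y_1+y_3}{2}\,\ell\le\frac{y_1+y_3}{2}(\ell_1+\ell_2)
\]
fails. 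A concrete instance: $y_1=y_2=0$, $y_3=-1$, $x_1=1$, $x_2=2$ gives $\ell=\sqrt5$, $\ell_1+\ell_2=1+\sqrt2$, and $-\tfrac12\sqrt5>-\tfrac12(1+\sqrt2)$, so your intermediate bound is false even though the lemma's conclusion $\mathcal A(\rho_2)\le\mathcal A(\rho_1)$ does hold there.

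This is why the paper proceeds by a reduction in three steps rather than a single application of the triangle inequality: first $y_1=y_2=0$ (where the sign of $y_3$ is handled directly by comparing $\sqrt{x_2^2+y_3^2}$ and $\sqrt{(x_2-x_1)^2+y_3^2}$), then $y_1=0<y_2$ (by truncating $\rho_1$ at the level $0$ and invoking the previous case), and finally $0<y_1\le y_2$ by the translation $\rho\mapsto\rho-y_1$, which uses $y_1\ge 0$ to keep the shifted configuration in the previously treated cases. Your argument is valid precisely in the regime $y_1+y_3\ge 0$; to complete it you would need a separate treatment of the case $y_1+y_3<0$, which is essentially what the paper's layered reduction supplies.
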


\begin{proof}
{\it Step 1}. Assume $y_1=y_2=0$. In this case, since $y_3<0$, the inequality \eqref{area_conv} reduces to 
\[
\sqrt{x_2^2+y_3^2}\ge \sqrt{(x_2-x_1)^2+y_3^2},
\]
which is true by construction. 
\\
\\
{\it Step 2}. Assume that $y_1=0<y_2$. Let $\overline x\in (x_1,x_2)$ be such that $\rho_1(\overline x)=0$ and let $\rho_3\colon [0,x_2]\to \R$ be given by 
\[
\rho_3(x)=\left\{\begin{array}{ll}
0 & \text{if $x\in [0,\overline x]$},\\
\\
\rho_1(x) & \text{if $x\in (\overline x,x_2]$}.
\end{array}\right.
\]
Then again
\[
\mathcal A(\rho_2)\stackrel{\text{Step 1}}{\le}\mathcal A(\rho_3)=\int_{\overline x}^{x_2}\rho_1\sqrt{1+(\rho_1')^2}\,dx\le \mathcal A(\rho_1).
\]
{\it Step 3}. Assume that $0<y_1\le y_2$. By Step 1 and Step 2, we have $\mathcal A(\rho_2-y_1)\le \mathcal A(\rho_1-y_1)$. Hence 
\[
\begin{aligned}
\mathcal A(\rho_2)&=\mathcal A(\rho_2-y_1)+y_1\int_0^{x_2}\sqrt{1+(\rho_2')^2}\,dx\\
&\le \mathcal A(\rho_1-y_1)+y_1\int_0^{x_2}\sqrt{1+(\rho_1')^2}\,dx\\
&=\mathcal A(\rho_1)
\end{aligned}
\]
and this concludes the proof.
\end{proof}

\begin{lemma}\label{lemmaconvexity-affine1}
Assume $y_3>0$. Then 
\begin{equation}\label{nematico_conv}
\mathcal N(\rho_2)\le \mathcal N(\rho_1).
\end{equation}
\end{lemma}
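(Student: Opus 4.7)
The plan is to exploit the fact that on any subinterval where $\rho$ is affine with slope $s$, the nematic integrand admits a closed-form primitive. A direct computation gives, for $s\neq 0$,
\begin{equation*}
\int_a^b \frac{(\rho')^2}{\rho\sqrt{1+(\rho')^2}}\,dx \;=\; \frac{s^2}{\sqrt{1+s^2}}\int_a^b\frac{dx}{\rho}\;=\; \frac{s}{\sqrt{1+s^2}}\log\frac{\rho(b)}{\rho(a)},
\end{equation*}
and both sides vanish when $s=0$. The hypothesis $y_3>0$, combined with $y_3<y_1\le y_2$, guarantees $y_1,y_2,y_3>0$, so that $\rho_1$ and $\rho_2$ are strictly positive on $[0,x_2]$ and every logarithm above is well defined.

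Setting $s_1:=(y_2-y_1)/x_1$, $s_2:=(y_3-y_2)/(x_2-x_1)$, $s_0:=(y_3-y_1)/x_2$ and $\varphi(s):=s/\sqrt{1+s^2}$, I apply the formula piecewise to obtain
\begin{equation*}
\mathcal N(\rho_1)=\varphi(s_1)\log\frac{y_2}{y_1}+\varphi(s_2)\log\frac{y_3}{y_2},\qquad \mathcal N(\rho_2)=\varphi(s_0)\log\frac{y_3}{y_1}.
\end{equation*}
Splitting $\log(y_3/y_1)=\log(y_2/y_1)+\log(y_3/y_2)$ and subtracting yields
\begin{equation*}
\mathcal N(\rho_1)-\mathcal N(\rho_2)=\bigl(\varphi(s_1)-\varphi(s_0)\bigr)\log\frac{y_2}{y_1}+\bigl(\varphi(s_2)-\varphi(s_0)\bigr)\log\frac{y_3}{y_2}.
\end{equation*}

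The conclusion will then follow from a sign check. From $y_3<y_1\le y_2$ I read off the slope ordering $s_2<s_0\le 0\le s_1$: the inequalities $s_1\ge 0$ and $s_0<0$ are immediate, while $s_2<s_0$ is the geometric statement that at the break point $x_1$ the chord lies strictly below $\rho_1$, namely $\rho_1(x_1)=y_2\ge y_1>y_1+s_0 x_1=\rho_2(x_1)$, while both functions meet at $x_2$. Since $\varphi$ is strictly increasing on $\R$, this forces $\varphi(s_1)-\varphi(s_0)\ge 0$ and $\varphi(s_2)-\varphi(s_0)\le 0$; combined with $\log(y_2/y_1)\ge 0$ and $\log(y_3/y_2)\le 0$, both summands above are non-negative, which is exactly \eqref{nematico_conv}. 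The only real delicacy is the sign bookkeeping and the use of $y_3>0$ to keep the singular factor $1/\rho$ integrable on both pieces; no deeper analytical obstacle is involved, which is why the restriction $y_3>0$ is an essential hypothesis here, in contrast to Lemma \ref{lemmaconvexity-affine}.
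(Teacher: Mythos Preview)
Your argument is correct. The explicit primitive $\int\frac{(\rho')^2}{\rho\sqrt{1+(\rho')^2}}\,dx=\varphi(s)\log\rho$ on an affine piece with slope $s$ is valid (the hypothesis $y_3>0$ makes all three values $y_1,y_2,y_3$ strictly positive, so the logarithms are well defined), and your sign analysis $s_2<s_0<0\le s_1$ together with the monotonicity of $\varphi$ is sound.

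The paper takes a different route, mirroring its proof of Lemma~\ref{lemmaconvexity-affine}: first it treats the special case $y_1=y_2$ by a direct computation (which, after simplification, gives the elementary inequality $\frac{1}{\sqrt{x_2^2+(y_3-y_1)^2}}\le \frac{1}{\sqrt{(x_2-x_1)^2+(y_3-y_1)^2}}$), and then reduces the general case $y_1<y_2$ to this one by inserting an intermediate function $\rho_3$ that is constant equal to $y_1$ up to the point where $\rho_1$ descends back to $y_1$ and coincides with $\rho_1$ thereafter, so that $\mathcal N(\rho_2)\le\mathcal N(\rho_3)\le\mathcal N(\rho_1)$. Your approach handles all cases at once via a single algebraic identity, at the cost of computing the primitive explicitly; the paper's approach avoids the closed-form primitive but needs the two-step reduction. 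Both are short and self-contained; yours has the minor advantage of making transparent why the inequality is in fact strict whenever $y_3<y_2$.
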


\begin{proof}
The proof proceeds as in the proof of Lemma \ref{lemmaconvexity-affine}.
\\
\\
{\it Step 1}. Assume $y_1=y_2$. By a straightforward computation, \eqref{nematico_conv} reduces to 
\[
\frac{1}{\sqrt{x_2^2+(y_3-y_1)^2}}\le \frac{1}{\sqrt{(x_2-x_1)^2+(y_3-y_1)^2}},
\]
which is true by construction. 
\\
\\
{\it Step 2}. Assume that $y_1<y_2$. Let $\overline x\in (x_1,x_2)$ be such that $\rho_1(\overline x)=y_1$ and let $\rho_3\colon [0,x_2]\to \R$ be given by 
\[
\rho_3(x)=\left\{\begin{array}{ll}
y_1 & \text{if $x\in [0,\overline x]$}\\
\\
\rho_1(x) & \text{if $x\in (\overline x,x_2]$}.
\end{array}\right.
\]
Then again
\[
\mathcal N(\rho_2)\stackrel{\text{Step 1}}{\le}\mathcal N(\rho_3)=\int_{\overline x}^{x_2}\frac{(\rho_1')^2}{\rho_1\sqrt{1+(\rho_1')^2}}\,dx\le \mathcal N(\rho_1)
\]
and this yields the conclusion.
\end{proof}

The next Lemma is fundamental to ensure that the piecewise approximation of the convex envelope strongly converges in $W^{1,1}$. We present it without proof since it is exactly Lemma 3.2 in \cite{greco2012}.

\begin{lemma}\label{lemma_conv}
Let $a,b\in \R$ with $a<b$ and let $(\rho_j)$ be a sequence of convex functions on $(a,b)$ such that $\rho_j\to \rho$ uniformly on every $[\gamma,\delta]\subset (a,b)$. Then $\rho_j' \to \rho'$ in $L^1(\gamma,\delta)$ for any $[\gamma,\delta]\subset (a,b)$.
\end{lemma}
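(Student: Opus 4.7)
The plan is to combine three ingredients: the convexity of the limit $\rho$, a subgradient argument yielding pointwise convergence of derivatives a.e., and a uniform $L^\infty$ bound on $\rho_j'$ on compact subintervals, after which the Lebesgue dominated convergence theorem closes the argument.

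First, the uniform limit $\rho$ of the convex functions $\rho_j$ is itself convex on $(a,b)$, hence differentiable outside a countable set. Since each $\rho_j$ is likewise differentiable outside a countable set, there is a countable (and therefore negligible) subset $N \subset (a,b)$ outside of which $\rho_j'(x)$ for every $j$ and $\rho'(x)$ are all defined. Fix $x \in (a,b) \setminus N$. For every $y \in (a,b)$ the subgradient inequality for $\rho_j$ reads
$$\rho_j(y) \geq \rho_j(x) + \rho_j'(x)(y-x).$$
Taking $y = x+h$ with $h > 0$, passing to the limit $j \to \infty$ by uniform convergence, and then dividing by $h$, one obtains $(\rho(x+h)-\rho(x))/h \geq \limsup_j \rho_j'(x)$; sending $h \to 0^+$ and using that $\rho$ is differentiable at $x$ yields $\rho'(x) \geq \limsup_j \rho_j'(x)$. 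The symmetric choice $y = x - h$ with $h > 0$ produces the reverse inequality $\rho'(x) \leq \liminf_j \rho_j'(x)$, whence $\rho_j'(x) \to \rho'(x)$.

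To upgrade this a.e. convergence to convergence in $L^1(\gamma, \delta)$, I would produce a uniform $L^\infty$ bound on $\rho_j'$ on $[\gamma, \delta]$. Pick $\gamma', \delta'$ with $a < \gamma' < \gamma < \delta < \delta' < b$; the uniform convergence of $\rho_j$ on $[\gamma', \delta']$ gives $M := \sup_j \|\rho_j\|_{L^\infty(\gamma',\delta')} < \infty$. For a.e. $x \in [\gamma, \delta]$, the monotonicity of difference quotients of convex functions yields
$$\frac{\rho_j(x) - \rho_j(\gamma')}{x - \gamma'} \leq \rho_j'(x) \leq \frac{\rho_j(\delta') - \rho_j(x)}{\delta' - x},$$
so $|\rho_j'(x)| \leq 2M/\min(\gamma - \gamma', \delta' - \delta)$ uniformly in $j$ and in $x \in [\gamma, \delta]$. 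The Lebesgue dominated convergence theorem then delivers $\rho_j' \to \rho'$ in $L^1(\gamma, \delta)$.

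The only nontrivial point is the pointwise step, where one must squeeze both $\limsup_j \rho_j'(x)$ and $\liminf_j \rho_j'(x)$ against $\rho'(x)$ by using the subgradient inequality in both directions; this squeeze relies essentially on the differentiability of the limit $\rho$ at $x$, which is automatic outside a countable set by convexity. Once this is in place, the $L^1$ conclusion follows from domination, with no further reliance on the specific structure of the sequence.
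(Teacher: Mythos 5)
Your argument is correct and complete. Note, however, that the paper does not prove this lemma at all: it is stated without proof and attributed verbatim to Lemma~3.2 of the cited work of Greco et al., so there is no internal proof to compare against. Your route is the standard one for this classical fact: (i) a.e.\ pointwise convergence $\rho_j'(x)\to\rho'(x)$ obtained by squeezing $\limsup_j\rho_j'(x)$ and $\liminf_j\rho_j'(x)$ between difference quotients of $\rho$ via the subgradient inequality, valid at every point where the (automatically convex) limit $\rho$ is differentiable, hence outside a countable set; and (ii) the uniform bound $|\rho_j'|\le 2M/\min(\gamma-\gamma',\delta'-\delta)$ on $[\gamma,\delta]$ coming from monotonicity of difference quotients and the uniform boundedness of $(\rho_j)$ on the slightly larger interval $[\gamma',\delta']$, which lets dominated convergence upgrade a.e.\ convergence to $L^1(\gamma,\delta)$. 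Both steps are sound; the only hypotheses used are convexity and locally uniform convergence, exactly as in the statement. A marginally different (and equally standard) route would exploit that the $\rho_j'$ are nondecreasing and combine a.e.\ convergence with the identity $\int_\gamma^\delta(\rho_j'-\rho')\,dx=(\rho_j(\delta)-\rho(\delta))-(\rho_j(\gamma)-\rho(\gamma))\to 0$, but your domination argument is cleaner and requires no extra ideas.
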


Once derivatives converge, we need to ensure that the nematic contribution is continuous in the $L^1$ topology.

\begin{lemma}\label{lemma_cont}
Let $a,b\in \R$ with $a<b$ and let $(u_j)$ be a sequence in $L^1(a,b)$ with $u_j\to u$ strongly in $L^1(a,b)$. Then 
\begin{equation}\label{converg1}
\sqrt{1+u_j^2}\to \sqrt{1+u^2} \quad \text{in $L^1(a,b)$}
\end{equation}
and 
\begin{equation}\label{converg2}
\frac{u_j^2}{\sqrt{1+u_j^2}}\to \frac{u^2}{\sqrt{1+u^2}} \quad \text{in $L^1(a,b)$}.
\end{equation}
\end{lemma}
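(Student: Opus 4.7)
The plan is to reduce both statements to a single elementary observation: if $F\colon\R\to\R$ is $L$-Lipschitz, then
$$
\norm{F(u_j)-F(u)}_{L^1(a,b)} \le L\,\norm{u_j-u}_{L^1(a,b)} \to 0.
$$
Thus everything boils down to showing that the two nonlinear maps involved are Lipschitz on the whole real line.

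For \eqref{converg1} I would take $f(t)=\sqrt{1+t^2}$. Its derivative $f'(t)=t/\sqrt{1+t^2}$ satisfies $|f'(t)|\le 1$ on $\R$, so $f$ is $1$-Lipschitz, and the conclusion is immediate from the observation above.

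For \eqref{converg2} my preferred route avoids differentiating a slightly awkward rational function by means of the algebraic identity
$$
\frac{t^2}{\sqrt{1+t^2}} = \sqrt{1+t^2} - \frac{1}{\sqrt{1+t^2}},
$$
which reduces the statement to \eqref{converg1} combined with the $L^1$-convergence of $1/\sqrt{1+u_j^2}$ to $1/\sqrt{1+u^2}$. Since $t\mapsto 1/\sqrt{1+t^2}$ has derivative $-t/(1+t^2)^{3/2}$, which is bounded on $\R$, a second application of the Lipschitz observation closes the argument. (Equivalently, one can differentiate $g(t)=t^2/\sqrt{1+t^2}$ directly, check that $g'(t)=t(2+t^2)/(1+t^2)^{3/2}$ is bounded, and conclude in a single step.)

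There is really no obstacle here: the whole proof amounts to a two-line Lipschitz estimate, and the only bit of ingenuity — if one wishes to call it that — is spotting the identity above or, alternatively, carrying out the routine derivative bound for $g$.
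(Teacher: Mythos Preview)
Your proof is correct. For \eqref{converg2} your direct route (checking that $g(t)=t^2/\sqrt{1+t^2}$ has bounded derivative, hence is Lipschitz) is exactly what the paper does; the algebraic identity you offer as an alternative is a pleasant variation but not needed.

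For \eqref{converg1} the two arguments genuinely differ. You observe that $f(t)=\sqrt{1+t^2}$ is $1$-Lipschitz and conclude immediately. The paper instead uses the subadditivity estimate $\left|\sqrt{1+u_j^2}-\sqrt{1+u^2}\right|\le \sqrt{|u_j^2-u^2|}=\sqrt{|u_j+u|}\sqrt{|u_j-u|}$, followed by Cauchy--Schwarz to get a bound $C\,\|u_j-u\|_{L^1}^{1/2}$. Your approach is shorter, gives the sharper linear (rather than square-root) rate, and has the virtue of treating both convergences by the same mechanism; the paper's argument, while valid, is somewhat more circuitous here.
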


\begin{proof}
Concerning the proof of \eqref{converg1}, using the subadditivity of the square root we have
\[
\left|\sqrt{1+u_j^2}- \sqrt{1+u^2}\right|\le \sqrt{|u_j^2-u^2|}. 
\]
Hence, by H\"older's inequality, we obtain 
\[
\begin{aligned}
\int_a^b\left|\sqrt{1+u_j^2}- \sqrt{1+u^2}\right|\,dx&\le\int_a^b \sqrt{|u_j^2-u^2|}\,dx\\
&=\int_a^b \sqrt{|u_j+u|}\sqrt{|u_j-u|}\,dx\\
&\le \left(\int_a^b|u_j+u|\,dx\right)^{1/2}\left(\int_a^b|u_j-u|\,dx\right)^{1/2}\\
&\le C \left(\int_a^b|u_j-u|\,dx\right)^{1/2}\to 0.
\end{aligned} 
\]
Next, in order to show \eqref{converg2}, we first introduce the function
\[
\ell(x):=\left\{\begin{aligned}
    &\R \to \R\\
&x\mapsto \frac{x^2}{\sqrt{1+x^2}}
\end{aligned}
\right.
\]
and we observe that it is Lipschitz. Denoting by $L$ its Lipschitz constant, we obtain
\[
\bigintsss_a^b \left|\frac{u_j^2}{\sqrt{1+u_j^2}}- \frac{u^2}{\sqrt{1+u^2}} \right|\,dx=\int_a^b|\ell(u_j)-\ell(u)|\,dx\le L\int_a^b|u_j-u|\,dx\to 0
\]
and this yields the conclusion.
\end{proof}

Thus, we are in position to show Proposition \ref{lemmaconvexity-affine_con}.

\begin{proof}[Proof of Proposition \ref{lemmaconvexity-affine_con}]
    Let $(\rho_j)$ be a sequence of piecewise affine functions in $X$ with $\rho_j \to \rho$ in $W^{1,1}$. Then $\rho_j \to \rho$ uniformly on $[-h,h]$. It is easy to see that $\rho_j^\ast$ remains piecewise affine and $\rho_j^\ast \to \rho^\ast$  uniformly on $[-h,h]$ as well. Applying Lemma \ref{lemma_conv} we obtain $(\rho_j^\ast)' \to (\rho^\ast)'$ in $L^1(-h,h)$. As a consequence we deduce that $\rho_j^\ast \to \rho^\ast$ in $W^{1,1}$. By Lemma \ref{lemmaconvexity-affine} and Lemma \ref{lemmaconvexity-affine1} we have $\mathcal E_c(\rho_j^\ast)\le \mathcal E_c(\rho_j)$. The conclusion follows using the continuity in Lemma \ref{lemma_cont} and passing to the limit.
\end{proof}

\section{Existence of a unique minimizer of \texorpdfstring{$\E_0$}{E0}}
\label{sec:E0}

In this section, we examine carefully the case $c = 0$. In order to show the existence of a minimizer for $\mathcal{E}_0$, we need to introduce an auxiliary functional. We define 
$$
X_0=\{\rho\in W^{1,1}(-h,h): \rho\ge0,\,\rho(-h)\le r,\,\rho(h)\le r\}
$$ 
and the energy functional 
\begin{equation}\label{tildeE}
\widetilde\E_0\colon = 
\left\{
\begin{aligned}
  &X_0 \to \R \\
  &\rho\mapsto \bigintsss_{-h}^h\rho\sqrt{1+(\rho')^2}\,dx+r^2-\frac{\rho(-h)^2+\rho(h)^2}{2}.
\end{aligned}
\right.
\end{equation}

\begin{remark}
Following \cite{bm1991}, it is possible to prove that $\widetilde\E_0$ is the restriction to $X_0$ of the relaxation of the functional 
\[
W^{1,1}(-h,h)\ni \rho \mapsto \left\{\begin{array}{ll}\E_0(\rho) & \text{if $\rho\in X_0$}\\
\\
+\infty & \text{otherwise} 
\end{array}\right.
\]
with respect to the local uniform convergence. Nevertheless, we do not need to prove that. Indeed, in the next Proposition we will adapt the proof of \cite[Theorem 4.2]{greco2012} showing directly the continuity of $\widetilde\E_0$ among all sequences of convex functions converging locally uniformly. 
\end{remark}

\begin{proposition}\label{cont_greco}
Let $(\rho_j)$ be a sequence of convex functions in $X_0$. Assume that $\rho_j\to \rho$ as $j\to +\infty$ uniformly on any closed interval $[\gamma,\delta]\subset (a,b)$ for some $\rho\in X_0$. Then
\begin{equation}\label{contE}
\lim_{j\to+\infty}\widetilde \E_0(\rho_j)=\widetilde \E_0(\rho).
\end{equation}
\end{proposition}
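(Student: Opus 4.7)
My plan follows the strategy of \cite[Theorem 4.2]{greco2012}. Fix $\eps>0$ and split $[-h,h]=[-h,-h+\eps]\cup[-h+\eps,h-\eps]\cup[h-\eps,h]$. On the interior strip $I_\eps:=[-h+\eps,h-\eps]$ the sequence $\rho_j\to\rho$ uniformly, so Lemmas \ref{lemma_conv} and \ref{lemma_cont} are directly applicable; on the two boundary strips, where the local uniform convergence hypothesis gives no information near $\pm h$, the correction $r^2-(\rho(-h)^2+\rho(h)^2)/2$ appearing in $\widetilde\E_0$ will be used to absorb the potential jumps of $\rho_j$ at the boundary.

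On $I_\eps$, applying Lemma \ref{lemma_conv} gives $\rho_j'\to\rho'$ in $L^1(I_\eps)$, and then Lemma \ref{lemma_cont} yields $\sqrt{1+(\rho_j')^2}\to\sqrt{1+(\rho')^2}$ in $L^1(I_\eps)$. Since each convex $\rho_j$ attains its maximum on $[-h,h]$ at an endpoint and $\rho_j(\pm h)\le r$, the uniform bound $\rho_j\le r$ holds, and combined with uniform convergence this gives $\int_{I_\eps}\rho_j\sqrt{1+(\rho_j')^2}\,dx\to\int_{I_\eps}\rho\sqrt{1+(\rho')^2}\,dx$.

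For the left boundary strip I would use the identity $\sqrt{1+t^2}=|t|+(\sqrt{1+t^2}+|t|)^{-1}$ to decompose
\[
\int_{-h}^{-h+\eps}\rho\sqrt{1+(\rho')^2}\,dx=\int_{-h}^{-h+\eps}\rho|\rho'|\,dx+R(\rho,\eps),
\]
with $0\le R(\rho,\eps)\le r\eps$. The first integral is the total variation of $\rho^2/2$ on $[-h,-h+\eps]$, which, using convexity of $\rho$ (so that $\rho$ is monotone on each side of its minimum), evaluates to $\tfrac12\bigl(\rho(-h)^2+\rho(-h+\eps)^2-2m_\eps^2\bigr)$ where $m_\eps:=\min_{[-h,-h+\eps]}\rho$. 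Subtracting $\rho(-h)^2/2$ therefore eliminates the boundary value $\rho(-h)$, leaving only $\rho(-h+\eps)^2/2-m_\eps^2+R(\rho,\eps)$; the analogous identity holds at the right endpoint.

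To pass to the limit at fixed $\eps$, $\rho_j(-h+\eps)\to\rho(-h+\eps)$ is immediate; the delicate point is $m_\eps^{(j)}\to m_\eps^{(\rho)}$. When the minimum of $\rho$ on $[-h,-h+\eps]$ is attained at an interior point or at $-h+\eps$, the minimizing points $x^{(j)}$ for $\rho_j$ converge to an interior point by uniform convergence and the convergence of $m_\eps^{(j)}$ is routine. The hard case is when the minimum of $\rho$ is attained at the endpoint $-h$: the boundary values $\rho_j(-h)$ are not controlled by local uniform convergence, and I would argue by convexity via a chord-comparison, namely that $\rho_j$ lies below the chord joining $(x^{(j)},m_\eps^{(j)})$ and $(h,\rho_j(h))$, so that passing to the limit and comparing with the pointwise convergence $\rho_j(x)\to\rho(x)$ at interior points rules out the scenario $\lim m_\eps^{(j)}<\rho(-h)$. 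Putting the three pieces together yields $|\widetilde\E_0(\rho_j)-\widetilde\E_0(\rho)|\le C\eps$ up to a remainder vanishing in $j$, and letting $\eps\to 0$ proves \eqref{contE}. The chord-comparison argument for the endpoint-minimum case is the main obstacle I anticipate.
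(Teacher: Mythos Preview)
Your approach is correct and takes a genuinely different route from the paper. The paper also handles the interior strip via Lemmas~\ref{lemma_conv} and~\ref{lemma_cont}, but on the boundary strips it performs the change of variable $y=\rho_j(x)$ to the inverse $u_j=\rho_j^{-1}$, writing $\int\rho_j\sqrt{1+(\rho_j')^2}\,dx=\int y\sqrt{1+(u_j')^2}\,dy$, and then splits into the cases $\rho(\pm h)=r$ versus $\rho(\pm h)<r$; in the latter case $u_j'\to 0$ and the strip integral converges to $\int_{\rho(-h)}^{r}y\,dy=r^2/2-\rho(-h)^2/2$, exactly the correction term in $\widetilde\E_0$. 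Your algebraic identity $\sqrt{1+t^2}=|t|+(\sqrt{1+t^2}+|t|)^{-1}$ together with the total-variation formula for $\rho^2/2$ achieves the same cancellation more uniformly, without inverting $\rho_j$ and without a case distinction on the boundary values. The price is the control of $m_\eps^{(j)}$, and your chord-comparison does the job: from convexity, for any $b\in(x^{(j)},h)$ one has $\rho_j(b)\le\tfrac{h-b}{h-x^{(j)}}\,m_\eps^{(j)}+\tfrac{b-x^{(j)}}{h-x^{(j)}}\,\rho_j(h)$; rearranging, sending $j\to\infty$ (using $x^{(j)}\to-h$, $\rho_j(b)\to\rho(b)$, $\rho_j(h)\le r$) and then $b\to(-h)^+$ yields $\liminf_j m_\eps^{(j)}\ge\rho(-h)\ge m_\eps$, while $\limsup_j m_\eps^{(j)}\le m_\eps$ follows from $m_\eps^{(j)}\le\rho_j(x)\to\rho(x)$ for interior $x$. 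So the obstacle you anticipate is not one. A small imprecision: even in your ``routine'' cases the minimizer $x^{(j)}$ could a priori escape to $-h$, and ruling this out again uses convexity (the global minimizer of $\rho_j$ must approach that of $\rho$); but since the chord bound covers all cases uniformly, this does not affect the argument.
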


\begin{proof}
{\it Step 1}. We first claim that for $\eta>0$ small enough it holds 
\begin{equation}\label{lim1}
\lim_{j\to+\infty}\int_{-h+\eta}^{h-\eta}\rho_j\sqrt{1+(\rho_j')^2}\,dx=\int_{-h+\eta}^{h-\eta}\rho\sqrt{1+(\rho')^2}\,dx.
\end{equation}
Indeed, \eqref{lim1} holds true since it is sufficient to combine the locally uniform convergence of  $\rho_j$ to $\rho$ with Lemma \ref{lemma_conv} and Lemma \ref{lemma_cont}.
\\
\\
{\it Step 2}. Assume that $\rho_j(-h)=r$, $\rho_j(h)=r$, $\rho(-h)=r$ and $\rho(h)=r$. We are going to prove that if $\sigma>0$ and $\eta>0$ is such that $|\rho(x)-r|<\sigma/2$ on $(-h,-h+\eta)\cup (h-\eta,h)$, then for some $M\ge 1$, it holds
\begin{equation}\label{lim2}
\left|\int_{-h}^{-h+\eta}\rho_j\sqrt{1+(\rho_j')^2}\,dx\right|\le M\int_{r-\sigma}^{r}y\,dy
\end{equation}
and
\begin{equation}\label{lim3}
\left|\int_{h-\eta}^h\rho_j\sqrt{1+(\rho_j')^2}\,dx\right|\le M\int_{r-\sigma}^ry\,dy.
\end{equation}
Combining \eqref{lim2},\eqref{lim3} with \eqref{lim1}, we easily deduce \eqref{contE}. For instance, let us show \eqref{lim2}. Since $\rho_j$ are convex functions and $\rho_j \to \rho$ locally uniformly on $(-h,h)$, we deduce that $\rho_j$ is invertible on $(-h,-h+\eta)$. Denoting by $u_j$ the inverse and making the change of variable $y=\rho_j(x)$ we obtain 
\[
\int_{-h}^{-h+\eta}\rho_j\sqrt{1+(\rho_j')^2}\,dx=\int_{\rho_j(-h+\eta)}^ry\sqrt{1+(u_j')^2}\,dy.
\]
Since $u_j'$ is uniformly bounded on $(\rho_j(-h+\eta),r)$, we get
\[
\left|\int_{\rho_j(-h+\eta)}^ry\sqrt{1+(u_j')^2}\,dy\right|\le M\int_{r-\sigma}^ry\,dy
\]
for some $M\ge 1$, which gives \eqref{lim2}.
\\
\\
{\it Step 3.} Assume now that $\rho_j(-h)=r$, $\rho_j(h)=r$, $\rho(-h)<r$ and $\rho(h)<r$ (the case $\rho(-h)=r$ and $\rho(h)<r$ and the case $\rho(-h)<r$ and $\rho(h)=r$ are similar). We divide the interval $(-h,-h+\eta)$ in a suitable $a_j$ and the interval $(h-\eta,h)$ in a suitable $b_j$. Precisely, take $\sigma>0$ with $\sigma<r-\rho(-h)$ and with $\sigma<r-\rho(h)$. 
As before, take $\eta>0$ such that $|\rho(x)-\rho(-h)|<\sigma/2$ on  $(-h,-h+\eta)$ and $|\rho(x)-\rho(h)|<\sigma/2$ on $(h-\eta,h)$. Let $a_j \in (-h,-h+\eta)$ be the unique solution of $\rho_j(a_j)=\rho(-h)+\sigma$ and let $b_j \in (h-\eta,h)$ be the unique solution of $\rho_j(b_j)=\rho(h)+\sigma$. Using the same argument as in Step 2, we can conclude. Indeed, $\rho_j$ is invertible on $(-h,a_j)$ and denoting by $u_j$ the inverse and making the change of variable $y=\rho_j(x)$, we obtain 
\[
\int_{-h}^{a_j}\rho_j\sqrt{1+(\rho_j')^2}\,dx=\int^r_{\rho(-h)+\sigma}y\sqrt{1+(u_j')^2}\,dy.
\]
In this case $u_j'\to 0$ uniformly on $(\rho(-h)+\sigma,r)$. As a consequence
\[
\lim_{j\to+\infty}\int^r_{\rho(-h)+\sigma}y\sqrt{1+(u_j')^2}\,dy=\int_{\rho(-h)+\sigma}^{r}y\,dy=\frac{r^2}{2}-\frac{(\rho(-h)+\sigma)^2}{2}.
\]
Similarly, 
\[
\lim_{j\to+\infty}\int_{b_j}^h\rho_j\sqrt{1+(\rho_j')^2}\,dx=\frac{r^2}{2}-\frac{(\rho(h)+\sigma)^2}{2}.
\]
Using again \eqref{lim1} and the arbitrariness of $\sigma$, we deduce \eqref{contE}.
\\
\\
{\it Step 4.} The general case can be easily obtained by a density argument.
\end{proof}

In what follows we will often use the function 
\begin{equation}\label{Phi}
\Phi\colon \left\{
\begin{aligned}
     (0,+\infty) &\to (0,+\infty)\\
     x &\mapsto \frac{\cosh x}{x}.
\end{aligned}
\right. 
\end{equation}
We also denote 
\begin{equation}
    \label{min_phi}
    m:=\min\Phi.
\end{equation}

\begin{theorem}\label{mainApp}
Assume \eqref{cond_catenaria}. The catenary $\rho_0$ defined in \eqref{eq:catenaria} is the unique minimizer of $\E_0$.
\end{theorem}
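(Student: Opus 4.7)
The plan is to pass through the relaxed functional $\widetilde\E_0$ on the enlarged class $X_0$: I produce a minimizer there by compactness, identify it explicitly, and descend back to $X$. The reason to enlarge is that a minimizing sequence for $\E_0$ on $X$ has no a-priori control on its derivative near the endpoints, and a locally uniform limit may detach from $r$ there; the functional $\widetilde\E_0$ on $X_0$ records the cost of such detachment via the boundary term $r^2-(\rho(-h)^2+\rho(h)^2)/2$.

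Given a minimizing sequence for $\widetilde\E_0$ on $X_0$, I would first invoke the convexification argument of Section \ref{sec_convex} (whose proof carries over to $X_0$ \emph{mutatis mutandis}) to assume each term convex. Convexity together with $\rho_n(\pm h)\le r$ forces $0\le\rho_n\le r$ uniformly, so the sequence is equi-Lipschitz on every $[-h+\eta,h-\eta]$. Ascoli--Arzel\`a plus a diagonal extraction produce a locally uniformly convergent subsequence with convex limit $\bar\rho\in X_0$, and Proposition \ref{cont_greco} upgrades this to $\widetilde\E_0(\rho_n)\to\widetilde\E_0(\bar\rho)$, so $\bar\rho$ minimizes $\widetilde\E_0$ on $X_0$.

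Next I would identify $\bar\rho$. On $\{\bar\rho>0\}$ the interior Euler--Lagrange equation of $\rho\sqrt{1+\rho'^2}$ is the catenary ODE, and convexity with nonnegativity prevents a nonzero catenary from meeting $0$ continuously, so either $\bar\rho\equiv 0$ on $(-h,h)$ or $\bar\rho$ is a single catenary on the whole interval. In the latter case, the boundary term $-\rho(\pm h)^2/2$ contributes the natural condition $\rho(\pm h)[1-\rho'(\pm h)/\sqrt{1+\rho'(\pm h)^2}]=0$ whenever $\rho(\pm h)<r$; since a catenary has finite slope this forces $\bar\rho(\pm h)=r$, and the identity $\cosh((h-B)/A)=\cosh((h+B)/A)$ then gives $B=0$. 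Hence $\bar\rho\in\{\rho_0,\rho_1,0\}$, with $\rho_0,\rho_1$ the two solutions of $\Pi\cosh(h/\Pi)=r$.

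The main obstacle is the energy comparison. Direct integration gives $\E_0(\rho_\Pi)=\Pi h+r\sqrt{r^2-\Pi^2}$ on any admissible catenary and $\widetilde\E_0(0)=r^2$; squaring shows $\E_0(\rho_\Pi)<r^2$ iff $\Pi>2hr^2/(r^2+h^2)$, equivalently $t\,e^t<\cosh(t)$ with $t=h/\Pi$ (using $r=\Pi\cosh(h/\Pi)$). The equation defining $\Xi$ simplifies via $1-\tanh(y)=e^{-y}/\cosh(y)$ to $e^{1/\Xi}=\Xi\cosh(1/\Xi)$, so the threshold is exactly $t^*=1/\Xi$. Writing $h/r=s/\cosh(s)$ with $s=h/\Pi_0$, the hypothesis $h/r\le\omega$ reads $s/\cosh(s)\le(1/\Xi)/\cosh(1/\Xi)$; since $\Pi_0$ is the largest root of $\Pi\cosh(h/\Pi)=r$, the value $s=h/\Pi_0$ lies on the increasing branch of $s\mapsto s/\cosh(s)$, giving $s\le 1/\Xi$ and $\E_0(\rho_0)\le r^2$. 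The other root satisfies $h/\Pi_1>1/\Xi$, so $\E_0(\rho_1)>r^2\ge\E_0(\rho_0)$. Therefore $\bar\rho=\rho_0$, which belongs to $X$; since $X\subset X_0$ and $\E_0=\widetilde\E_0$ on $X$, $\rho_0$ minimizes $\E_0$ on $X$, and any other minimizer in $X$ must also minimize $\widetilde\E_0$ on $X_0$ and hence equal $\rho_0$ by the same characterization.
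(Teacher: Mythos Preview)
Your overall architecture matches the paper's exactly: relax to $\widetilde\E_0$ on $X_0$, use convexification and Proposition~\ref{cont_greco} to obtain a convex minimizer, identify it via the interior Euler--Lagrange equation and a boundary condition as lying in $\{0,\rho_0,\rho_1\}$, then compare energies. Two points deserve comment.

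First, your energy comparison is cleaner than the paper's. The paper treats $\rho_0$ versus $\rho_1$ (Step~5, via an auxiliary function $u(s)=\text{sech}^2(1/s)/s+\tanh(1/s)$) and $\rho_0$ versus $0$ (Step~6) separately. You unify both by reducing $\E_0(\rho_\Pi)<r^2$ to $te^t<\cosh t$ with $t=h/\Pi$, and by observing that the defining equation of $\Xi$ simplifies (via $1-\tanh y=e^{-y}/\cosh y$) to exactly $t^*e^{t^*}=\cosh t^*$ at $t^*=1/\Xi$. This lets $r^2$ serve as a common reference: $\E_0(\rho_0)\le r^2<\E_0(\rho_1)$ under \eqref{cond_catenaria}. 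The paper does not exploit this simplification.

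Second, two steps are asserted a bit quickly. Your boundary argument (``the natural condition $\rho(\pm h)[1-\rho'/\sqrt{1+(\rho')^2}]=0$'') is legitimate because you already know the minimizer is a $C^2$ catenary near the endpoint, so the free-endpoint variation is admissible; the paper instead builds an explicit piecewise-affine competitor. For the branch comparison, the inequality $h/\Pi_1>1/\Xi$ needs one line: since $t_m$ (the maximizer of $s/\cosh s$) satisfies $t_m\tanh t_m=1$, one has $t_m>1$ and hence $t_me^{t_m}-\cosh t_m=(t_m-1)\cosh t_m>0$, so $t_m>1/\Xi$; then $h/\Pi_1>t_m>1/\Xi$. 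Finally, for uniqueness the phrase ``by the same characterization'' is slightly imprecise: you should say that any minimizer $\rho\in X$ has $\rho>0$, hence satisfies the catenary ODE on all of $(-h,h)$ without invoking convexity, and therefore lies in $\{\rho_0,\rho_1\}$.
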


\begin{proof} 
{\it Step 1.} We prove that $\widetilde\E_0$ has a even and convex minimizer $\rho_m$ on $X_0$. Let $(\rho_j)$ be a minimizing sequence. First of all, we show that we can assume $\rho_j$ even. Indeed, let
\[
a_j=\int_{-h}^0\rho_j\sqrt{1+(\rho_j')^2}\,dx+r^2-\frac{\rho_j(-h)^2+\rho_j(h)^2}{2}, \qquad b_j=\int_0^h\rho_j\sqrt{1+(\rho_j')^2}\,dx+r^2-\frac{\rho_j(-h)^2+\rho_j(h)^2}{2}.
\]
If $a_j=b_j$ then $\widetilde \E_0(\rho_j)=\widetilde \E_0(\rho_j(|\cdot|))$. Assume $a_j\ne b_j$, for instance $a_j<b_j$. Then, taking $\overline\rho_j(x)=\rho_j(x)$ if $x\in (-h,0]$ and $\overline \rho_j(x)=\rho_j(-x)$ if $x\in [0,h)$, we get $\widetilde \E_0(\overline\rho_j)< \widetilde \E_0(\rho_j)$. This means that $(\overline\rho_j)$ is still a minimizing sequence for $\widetilde \E_0$. 

From now on, we therefore assume $\rho_j$ even. Notice that $\rho_j^\ast \in X_0$ is even as well. Since by construction $\rho_j(\pm h)=\rho_j^\ast(\pm h)$, using Proposition \ref{lemmaconvexity-affine_con}, we get 
\[
\widetilde \E_0(\rho_j^\ast)=\E_0(\rho_j^\ast)+r^2-\rho_j^\ast(h)^2\le \E_0(\rho_j)+r^2-\rho_j(h)^2=\widetilde \E_0(\rho_j).
\]
As a consequence, the sequence $(\rho_j^\ast)$ is still a minimizing sequence for $\widetilde \E_0$. By convexity $\rho_j^\ast \le r$ everywhere, so that the sequence $(\rho_j^\ast)$ is uniformly bounded. Since $\rho_j^\ast$ are convex, we can apply the Ascoli-Arzel\`a Theorem on any closed interval $[a,b]\subset (-h,h)$. As a consequence, we deduce that there is $\rho_m\in X_0$ such that (up to a subsequence not relabeled) $\rho_j^\ast\to \rho_m$ uniformly on $[a,b]$. By \eqref{contE}, the Direct method of the Calculus of Variations ensures that $\rho_m$ is a minimizer of $\widetilde\E_0$.
Moreover, $\rho_m$ is even and convex since it is a pointwise limit of even and convex functions. In particular, $\rho_m$ is decreasing on $[-h,0]$ and increasing on $[0,h]$.

In what follows, we are going to prove that such a limit $\rho_m$ coincides with $\rho_0$. This implies that $\rho_m\in X$ and therefore $\rho_m$ minimizes $\mathcal E_0$ on $X$. From now on, for simplicity of notation, we denote $\rho=\rho_m$.
\\
\\
{\it Step 2.} Assume there is an open interval $(a,b) \subseteq [0,h]$ where $\rho>0$. We claim that
\begin{equation}\label{forma}
\rho(x)=\Pi\cosh\frac{x-x_0}{\Pi}, \quad x\in (a,b),
\end{equation}
for some $x_0\in \R$ and some $\Pi>0$. In particular, $\rho \in C^1(a,b)$. 

Let $\varphi\in C^1(-h,h) \cap C^1_c(a, b)$ and let $\sigma>0$ be small enough. Then, for any $t\in (-\sigma,\sigma)$ we have 
\[
0=\frac{d}{dt}\widetilde \E_0(\rho+t\varphi)_{\big|_{t=0}}=\int_a^b\sqrt{1+(\rho')^2}\varphi\,dx+\int_a^b\frac{\rho\rho'}{\sqrt{1+(\rho')^2}}\varphi'\,dx.
\]
Let $F \colon [0,h] \to \R$ be given by  
\[
F(x)=\int_0^x\sqrt{1+(\rho')^2}\,dx,
\]
then 
\[
\bigintsss_a^b \left(\frac{\rho\rho'}{\sqrt{1+(\rho')^2}}-F\right)\varphi'\,dx=0.
\]
Applying the Du Bois-Reymond Lemma, we deduce that 
\begin{equation}\label{el_weak}
\frac{\rho\rho'}{\sqrt{1+(\rho')^2}}-F=k, \qquad \text{a.e.\,on $(a,b)$}.
\end{equation}
where $k$ is a constant. Let us introduce the function
\[
\Psi:=\left\{\begin{aligned}
    &[0,+\infty) \to\R\\
    &x \mapsto\frac{x}{\sqrt{1+x^2}}.
\end{aligned}
\right.
\]
Then, $\Psi^{-1} \in C^1([0,1))$ and 
\[
\rho'=\Psi^{-1}\left(\frac{F+k}{\rho}\right),\qquad \text{a.e.\,on $(a,b)$}.
\]
As a consequence, $\rho'\in W^{1,1}(a,b)$ which means $\rho\in W^{2,1}(a,b)$. Differentiating \eqref{el_weak}, we get 
\[
\rho\rho''=1+(\rho')^2, \qquad \text{a.e.\,on $(a,b)$},
\]
which has the prime integral 
\[
\frac{\rho}{\sqrt{1+(\rho')^2}}=\Pi,\qquad \text{a.e.\,on $(a,b)$}
\]
where $\Pi$ is a constant. We therefore obtain $\Pi^2\rho''=\rho$. It is classically known that the general solution can be written as 
\[
\rho(x)=\Pi\cosh\frac{x-x_0}{\Pi}, \quad x\in (a,b),
\]
for some $x_0\in \R$.
\\
\\
{\it Step 3.} We prove that if $\rho(h)>0$ then $\rho(h)=r$. Since $\rho$ is even, it follows also that if $\rho(-h)>0$ then $\rho(-h)=r$. Assume by contradiction that $\rho(h)\in (0,r)$. Let $\eta \in (0,h)$ and let $\zeta \in (\rho(h),r)$. Let $\rho_\eta\in Y$ be given by 
\[
\rho_\eta(x)=\left\{\begin{array}{ll}
\rho(x)& \text{if $x\in (-h,h-\eta)$}\\
\\
\displaystyle\frac{\zeta-\rho(h-\eta)}{\eta}(x-h)+\zeta& \text{if $x\in [h-\eta,h)$}
\end{array}\right.
\]
and let $g\colon(0,h)\to \R$ be given by $g(\eta)=\widetilde \E_0(\rho_\eta)$. Since $\rho_\eta \to \rho$ uniformly on any $[a,b]\subset (-h,h)$, using \eqref{contE}, we deduce that 
\[
\lim_{\eta\to0^+}g(\eta)=\widetilde \E_0(\rho).
\]
As a consequence, $g$ becomes continuous on $[0,h)$ with $g(0)=\widetilde \E_0(\rho)$. Moreover, $g$ is differentiable on $(0,h)$. By definition, for any $\eta>0$ we have 
\begin{equation}\label{E}
g(\eta)=\underbrace{\int_{-h}^{h-\eta}\rho\sqrt{1+(\rho')^2}\,dx}_{\mathcal{T}_1}+\underbrace{\int_{h-\eta}^h\rho_\eta\sqrt{1+(\rho_\eta')^2}\,dx}_{\mathcal{T}_2}+r^2-\frac{\rho(-h)^2+\zeta^2}{2}.
\end{equation}
Differentiating $g(\eta)$ with respect to $\eta$ and computing in $\eta= 0$, noticing that $g'(\eta) = \partial_\eta \mathcal{T}_1 + \partial_\eta \mathcal{T}_2$, we get for the first term $\mathcal{T}_1$ in \eqref{E}
\[
\frac{d\mathcal{T}_1}{d\eta}_{\big|_{\eta=0}}=-\rho(h)\sqrt{1+(\rho'(h))^2},
\]
where $\rho'(h)$ exists by convexity and it is finite by Step 1, since $\rho>0$ in a left neighborhood of $h$. A similar computation shows that for $\mathcal{T}_2$ we have
\[
\mathcal{T}_2 =\int_{h-\eta}^h\rho_\eta\sqrt{1+(\rho_\eta')^2}\,dx=\sqrt{\eta^2+(\zeta-\rho(h-\eta))^2}\,\frac{\zeta+\rho(h-\eta)}{2}.
\]
As a consequence, computing the limit as $\eta \to 0^+$, we obtain
\[
\frac{d\mathcal{T}_2}{d\eta}_{\big|_{\eta=0}}=\rho(h)\rho'(h).
\]
Then, it holds 
\[
g'(0)=\rho(h)\left(\rho'(h)-\sqrt{1+(\rho'(h))^2}\right)<0
\]
which contradicts the minimality of $\rho$. 
\\
\\
{\it Step 4.} Let us consider the system
\begin{equation}\label{problema}
\left\{\begin{array}{ll}
\displaystyle r=\Pi\cosh\frac{h}{\Pi},\\
\Pi>0.
\end{array}\right.
\end{equation}
Defining $\xi=\Pi/h$, the equation in \eqref{problema} can be rewritten as 
\begin{equation}
    \label{summaryfunction}
    \frac{h}{r}=\frac{1}{\xi \cosh \frac{1}{\xi}}=\left(\Phi\left(\frac{1}{\xi}\right)\right)^{-1}=:\mu(\xi).
\end{equation}
Notice that the problem \eqref{problema} has two different solutions $\Pi_0,\Pi_1$ with $\Pi_0>\Pi_1$ when $\frac{h}{r}<\frac{1}{m}$; it has one solution if $\frac{h}{r}=\frac{1}{m}$, which we still denote by $\Pi_0$; while it has no solutions if $\frac{h}{r}>\frac{1}{m}$. The function $\mu(\xi)$ is plotted in Figure \ref{summary}, together with the line $1/m$, and the two intersections $\Pi_0/h$ and $\Pi_1/h$ between $\mu$ and a generic line $h/r$, when $\frac{h}{r}<\frac{1}{m}$. When $\frac{h}{r}<\frac{1}{m}$, we aim to understand which solution between $\Pi_0$ or $\Pi_1$ of \eqref{problema} is the minimal one. Thus, we denote with $\rho_0,\rho_1 \colon [-h,h]\to \R$ the two catenaries given by 
\[
\rho_i(x)=\Pi_i\cosh\frac{x}{\Pi_i}, \quad i=0,1.
\]
When $\frac{h}{r}=\frac{1}{m}$, we still denote with $\rho_0$ the catenary associated to the unique solution $\Pi_0$. 

We claim that only one of these possibilities holds true: 
\begin{itemize}
\item[\rm(i)] $\rho=0$;
\item[\rm(ii)] $\rho=\rho_0$;
\item[\rm(iii)] $\rho=\rho_1$.
\end{itemize}
First of all, if $\rho(-h)=0$, hence also $\rho(h)=0$. Then by convexity, it must be $\rho=0$. If $\rho(-h)=\rho(h)>0$, then by Step 2, we have $\rho(-h)=\rho(h)=r$. This means that in a right neighborhood of $-h$ and in a left neighborhood of $h$ the minimizer $\rho$ has the form \eqref{forma} for some $\Pi>0$ and $x_0\in\R$. Since $\rho$ is even, it has to be $x_0=0$. Therefore, $\Pi$ solves problem \eqref{problema}, which means that either $\rho=\rho_0$ or $\rho=\rho_1$. 
\\
\\
{\it Step 5.} We claim that if $\frac{h}{r}<\frac{1}{m}$ then $\widetilde\E_0(\rho_0)<\widetilde\E_0(\rho_1)$. We explicitly have 
\[
\widetilde \E_0(\rho_i)=\Pi_ih+\Pi_i^2\sinh \frac{h}{\Pi_i}\cosh\frac{h}{\Pi_i}\stackrel{\eqref{problema}}{ = }\Pi_ih+ r\sqrt{r^2-\Pi_i^2}, \qquad i=0,1.
\]
Set $\lambda=h/r$ and $\xi_0=\Pi_0/h$, and $\xi_1=\Pi_1/h$. Then the inequality 
\[
\Pi_0h+ r\sqrt{r^2-\Pi_0^2}<\Pi_1h+ r\sqrt{r^2-\Pi_1^2}
\]
reads as 
\begin{equation}\label{confronto}
\xi_0\lambda^2+\sqrt{1-\xi_0^2\lambda^2}<\xi_1\lambda^2+\sqrt{1-\xi_1^2\lambda^2}.
\end{equation}
Since for $i = 1,2$
\[
\xi_i\lambda=\frac{\Pi_i}{r}=\left(\cosh\frac{h}{\Pi_i}\right)^{-1}=\left(\cosh\frac{1}{\xi_i}\right)^{-1},
\]
\eqref{confronto} easily reduces to 
\[
\frac{\text{sech}^2(1/\xi_0)}{\xi_0}+ \tanh(1/\xi_0)<\frac{\text{sech}^2(1/\xi_1)}{\xi_1}+ \tanh(1/\xi_1).
\]
Let $u \colon (0,+\infty) \to \R$ be the function given by 
$$
u(s):= \frac{\text{sech}^2(1/s)}{s}+ \tanh(1/s).
$$ 
Notice that $\Xi$, given by \eqref{eq:def_omega}, is the unique positive solution of the equation $u(s) = 1$. It is not difficult to see that $u(s) \geq 1$ for $s \leq \Xi$, while $u(s)<1$ if $s >\Xi$ (see, for instance, Figure \ref{functionu}).
\begin{figure}[htbp]
	\centering
	\includegraphics[width=0.65\textwidth]{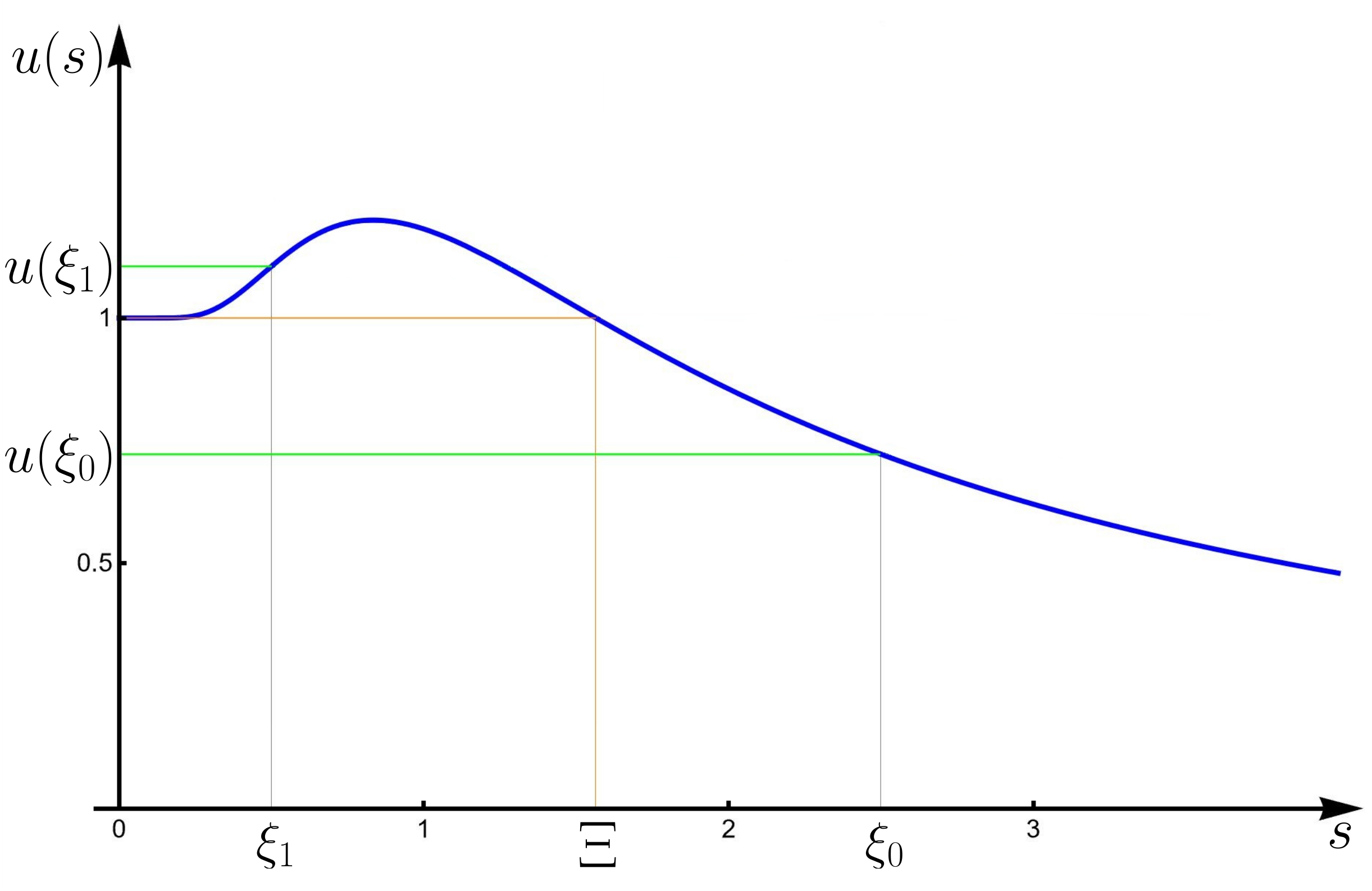}
	\caption{In blue, the function $u(s)$  and in orange the unique abscissa $\Xi$ with $u(\Xi)=1$. Grey lines refer to the two abscissas $\xi_0$ and $\xi_1$ corresponding to the two catenaries. Green lines correspond to $u(\xi_0)$ and $u(\xi_1)$.}
	\label{functionu}
\end{figure} 
Since $\Xi$ is the smallest value of possible $\xi_0$ such that $\frac{h}{r}\le\omega$, we have $\xi_1<\Xi<\xi_0$ which means that $u(\xi_0) < u(\xi_1)$ and this concludes the proof since it immediately implies that $\widetilde\E_0(\rho_0)<\widetilde\E_0(\rho_1)$. 
\\
\\
{\it Step 6.} We claim that: 
\begin{itemize}
    \item[\rm(i)] if $\frac{h}{r}<\omega$, then $\widetilde\E_0(\rho_0)<\widetilde \E_0(0)$; 
    \item[\rm(ii)] if $\frac{h}{r}=\omega$, then $\widetilde\E_0(\rho_0)=\widetilde\E_0(0)$; 
    \item[\rm(iii)] if $\frac{h}{r}>\omega$, then $\widetilde\E_0(\rho_0)>\widetilde \E_0(0)$.   
\end{itemize}
To conclude, it is sufficient to study the inequality $\Pi_0h+ r\sqrt{r^2-\Pi_0^2}<r^2$, since 
\[
\widetilde \E_0(\rho_0)=\Pi_0h+ r\sqrt{r^2-\Pi_0^2}, \qquad \widetilde \E_0(0)=r^2.
\]
Performing the same change of variable as above (see Step 5), we get
\[
\xi \tanh \frac{1}{\xi}+\text{sech}^2\frac{1}{\xi}<\xi,
\]
where $\xi = \Pi_0/h$ and it holds true if and only if $\xi>\Xi$. The condition $\xi>\Xi$ means that 
\[
\frac{h}{r}<\frac{1}{\Xi\cosh(1/\Xi)}=\omega.
\]
This completes the proof of (i). Just by inverting the inequality or studying the equality, it is immediate to show (ii) and (iii).
\end{proof}

\begin{remark}
\label{rem:analisi_catenaria}
A more careful analysis of the functional $\widetilde\E_0$ in terms of minimal surfaces of revolution gives the following well known conclusions \cite{Isenberg1978TheSO}, see also Figure \ref{summary}.

\begin{itemize}
\item[(a)] If $\frac{h}{r}<\omega$, then the catenoid generated by $\rho_0$ is the unique area minimizing surface spanning the two coaxial rings of radius $r$. 
\item[(b)] If $\frac{h}{r}=\omega$, then $\rho_0$ gives the catenoid as area minimizing surface, but the two discs of radius $r$ have the same area. The solution given by the two discs is known as Goldschmidt solution. 
\item[(c)] If $\frac{h}{r} \in(\omega,1/m]$, then the catenoid generated by $\rho_0$ still exists but it is only a local area minimizing surface, whereas the unique global area minimizing surface is the Goldschmidt solution. 
\item[(d)] If $\frac{h}{r}>1/m$, then no catenaries joining the boundary points exist, and the Goldschmidt solution is the unique global area minimizing surface.
\end{itemize}
Furthermore, it is found numerically that $\omega\simeq 0.528$ and $1/m\simeq 0.663$.

\end{remark}
\begin{figure}[htbp]
	\centering
	\includegraphics[width=0.6\textwidth]{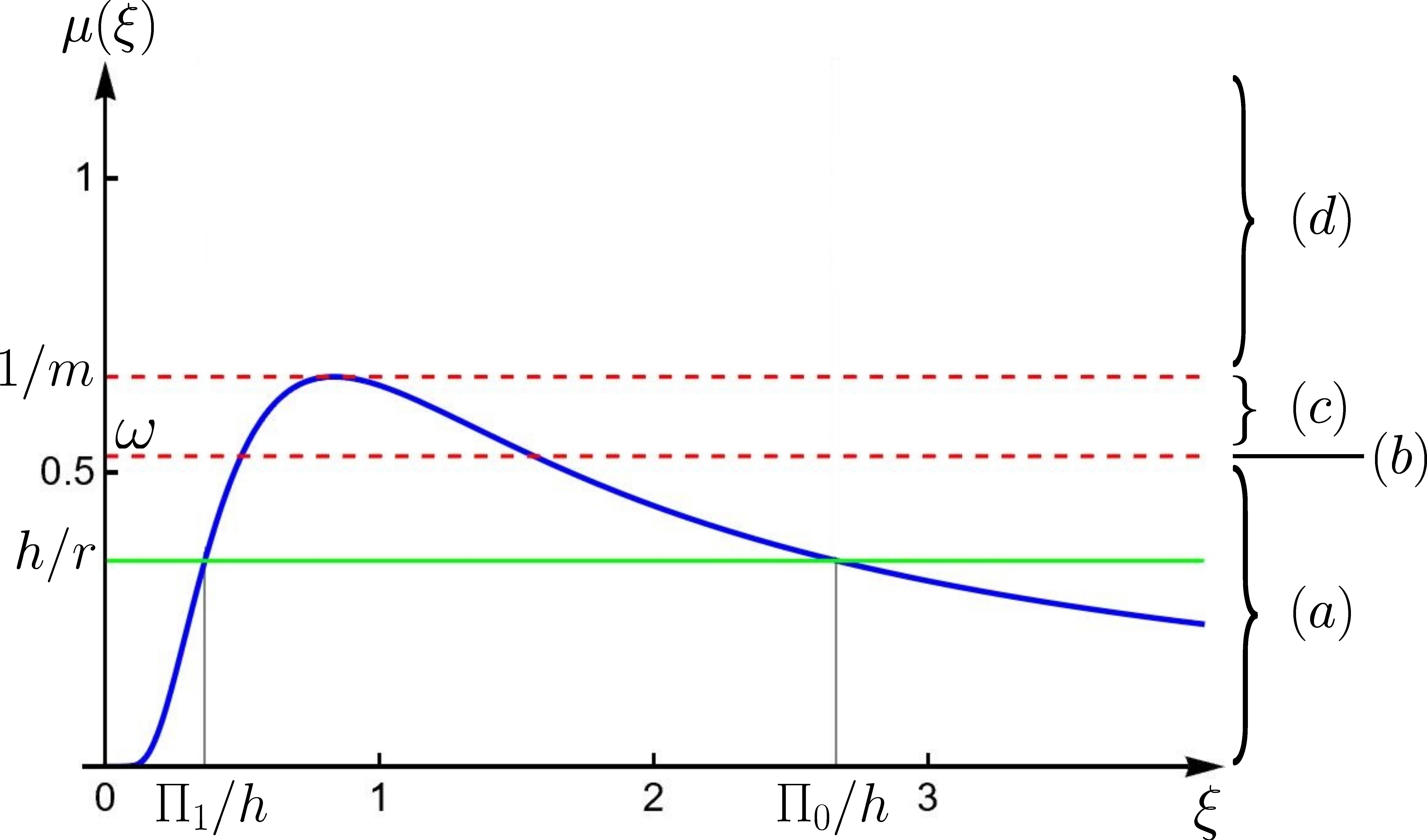}
	\caption{In blue, the function $\mu(\xi)$ defined in \eqref{summaryfunction} and in green a generic line at height $h/r$. Red lines refer to the two constants $1/m$ and $\omega$ introduced in Remark \ref{rem:analisi_catenaria} to distinguish the four possible cases. In the case $\frac{h}{r}<\frac{1}{m}$, the two intersection points $(\Pi_0/h, \mu(\Pi_0/h))$ and $(\Pi_1/h,\mu(\Pi_1/h))$, with $\Pi_0>\Pi_1$ are shown.}
	\label{summary}
\end{figure}
\section{Existence of a minimizer of \texorpdfstring{$\E_c$}{Ec}}
\label{sec:esistenza_minimi_Ec}

In this section we prove the existence of a minimizer of $\E_c$. A crucial step is the proof of compactness of minimizing sequences in $X$. It turns out that we are able to get compactness in $W^{1,\infty}(-h,h)$. Moreover, the functional $\E_c$ is continuous with respect to the strong $W^{1,1}$ convergence.

In what follows we will often use the function
\begin{equation}
    \label{eq:f}
    f:= \left\{
\begin{aligned}
    [0, +\infty) &\to \R\\
    x &\mapsto \frac{x^2}{\sqrt{1+x^2}}.
\end{aligned}
\right.
\end{equation}
Let $z_0:= \sqrt{\frac{1}{2} \left(\sqrt{5}-1\right)}$. Precisely, $z_0$ is the unique positive real such that $f'(z_0)=1$.

\begin{lemma}
For all $y \in [0, z_0]$ and for all $x \geq 0$, we have 
\begin{equation}\label{eq:dis_funzione_f}
f(x) - f(y) \geq f'(y)(x-y).
\end{equation}
\end{lemma}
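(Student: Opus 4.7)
The plan is to reduce the inequality to showing that the auxiliary function $\varphi(x) := f(x) - f(y) - f'(y)(x-y)$ attains its global minimum on $[0, +\infty)$ at $x = y$, where $\varphi(y) = 0$. Since $\varphi'(x) = f'(x) - f'(y)$, this amounts to proving that $f'(x) \leq f'(y)$ for $x \in [0, y]$ and $f'(x) \geq f'(y)$ for $x \in [y, +\infty)$.

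First I compute
\[
f'(x) = \frac{x(x^2+2)}{(1+x^2)^{3/2}}, \qquad f''(x) = \frac{2-x^2}{(1+x^2)^{5/2}}.
\]
Thus $f'$ is strictly increasing on $[0, \sqrt{2}]$ and strictly decreasing on $[\sqrt{2}, +\infty)$, with $f'(x) \to 1$ as $x \to +\infty$. A direct algebraic manipulation yields
\[
(f'(x))^2 - 1 = \frac{x^4 + x^2 - 1}{(1+x^2)^3},
\]
and $x^4 + x^2 - 1 = 0$ on $[0, +\infty)$ holds precisely at $x = z_0 = \sqrt{(\sqrt{5}-1)/2}$. Hence $f'(x) \geq 1$ if and only if $x \geq z_0$, and in particular $z_0 < \sqrt{2}$.

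With these facts in hand, fix $y \in [0, z_0]$, so that $f'(y) \leq f'(z_0) = 1$. If $x \in [0, y]$, monotonicity of $f'$ on $[0, \sqrt{2}]$ gives $f'(x) \leq f'(y)$, hence $\varphi' \leq 0$ there. If $x \in [y, +\infty)$, I split two subcases: for $x \in [y, z_0]$, monotonicity again yields $f'(x) \geq f'(y)$; for $x > z_0$, the characterization above gives $f'(x) \geq 1 \geq f'(y)$. In both subcases $\varphi' \geq 0$. Therefore $\varphi$ is nonincreasing on $[0, y]$ and nondecreasing on $[y, +\infty)$, so $\varphi \geq \varphi(y) = 0$ on $[0, +\infty)$, which is exactly \eqref{eq:dis_funzione_f}.

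The main obstacle is the regime $x > \sqrt{2}$, where $f$ is concave and the desired inequality cannot follow from convexity of $f$ alone. The hypothesis $y \leq z_0$ is precisely what bypasses this difficulty: it forces $f'(y) \leq 1$, and although $f'$ decreases back to $1$ along $[\sqrt{2}, +\infty)$, the asymptotic value $1$ is never crossed from above, so $f'(x) \geq 1 \geq f'(y)$ holds throughout $[z_0, +\infty)$.
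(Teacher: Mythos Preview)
Your proof is correct. The paper's own argument is terse: it merely rewrites \eqref{eq:dis_funzione_f} as an explicit algebraic inequality and asserts it holds for $x\ge 0$, $y\in[0,z_0]$. Your route is different and more informative: by studying the shape of $f'$ (increasing on $[0,\sqrt 2]$, decreasing on $[\sqrt 2,+\infty)$ with infimum $1$) and identifying $z_0$ as the unique point where $f'=1$, you show that $f'(x)\ge f'(y)$ for all $x\ge y$ and $f'(x)\le f'(y)$ for all $x\le y$, which immediately gives the minimum of $\varphi$ at $y$. This explains structurally why the bound $y\le z_0$ is exactly the right hypothesis (it forces $f'(y)\le 1$, below the asymptotic floor of $f'$), whereas the paper's computation leaves that opaque.
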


\begin{proof}
The proof is just a computation: \eqref{eq:dis_funzione_f} reduces to
$$
\frac{1}{x-y}\left(\frac{x^2}{\sqrt{x^2+1}}-\frac{y^2}{\sqrt{y^2+1}}\right)\geq \frac{y
   \left(y^2+2\right)}{\left(y^2+1\right)^{3/2}},
$$
which is indeed satisfied for all $x\geq 0$ and $y \in [0, z_0]$.
\end{proof} 

\begin{lemma}\label{lemma_sup}
Assume \eqref{cond_catenaria}. Let $\Pi=\Pi(h,r)>0$ be the largest positive solution of the equation
\[
\Pi\cosh\frac{h}{\Pi}=r.
\]
Then 
\begin{equation}\label{sup}
\sup\left\{\sinh\frac{h}{\Pi}: \frac{h}{r}\le \omega\right\}<z_0.
\end{equation}
\end{lemma}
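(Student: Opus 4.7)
The plan is first to reduce the supremum to a single explicit value by a monotonicity argument, and then to verify the resulting inequality $\sinh(1/\Xi)<z_0$ by manipulating the implicit equation defining $\Xi$.

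For the reduction, set $s:=h/\Pi$ and rewrite the defining relation $\Pi\cosh(h/\Pi)=r$ as $s/\cosh(s)=h/r$; via the substitution $\xi=1/s$ this is exactly the function $\mu$ from \eqref{summaryfunction}. Under \eqref{cond_catenaria} we have $h/r\le\omega<1/m$, so, as already discussed in Step 4 of the proof of Theorem \ref{mainApp}, the hypothesis that $\Pi$ is the largest positive root picks out $\xi=\Pi/h$ on the decreasing branch of $\mu$; equivalently, $s$ lies on the increasing branch of $\tau\mapsto\tau/\cosh(\tau)$, where it depends strictly monotonically on $h/r$. Since $h/r=\omega$ gives $s=1/\Xi$, we obtain $h/\Pi\le 1/\Xi$ throughout the admissible range, with equality attained; monotonicity of $\sinh$ then collapses the claim to $\sinh(1/\Xi)<z_0$.

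Next I would rewrite the defining equation of $\Xi$ in a convenient form. Dividing $\Xi\tanh(1/\Xi)+\text{sech}^2(1/\Xi)=\Xi$ by $\Xi$, multiplying by $\cosh^2(1/\Xi)$, and using $\sinh(\tau)\cosh(\tau)=\tfrac12\sinh(2\tau)$ together with $\cosh^2(\tau)=\tfrac12(1+\cosh(2\tau))$ collapses the equation to $\cosh(2/\Xi)-\sinh(2/\Xi)=2/\Xi-1$, that is,
\begin{equation*}
e^{-2/\Xi}=\frac{2}{\Xi}-1.
\end{equation*}
Setting $u:=2/\Xi-1=e^{-2/\Xi}\in(0,1)$ produces $u e^u=1/e$. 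Since $2\sinh^2(1/\Xi)=\cosh(2/\Xi)-1=\tfrac12(u+1/u)-1$, the target $\sinh^2(1/\Xi)<z_0^2=(\sqrt5-1)/2$ rewrites as $u+1/u<2\sqrt5$; and because $u<1<\sqrt5+2$, this is equivalent to $u>\sqrt5-2$.

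The last step is the explicit estimate $u>\sqrt5-2$. Since $t\mapsto te^t$ is strictly increasing on $[0,+\infty)$, the bound reduces to $(\sqrt5-2)e^{\sqrt5-2}<1/e$, i.e., $(\sqrt5-2)e^{\sqrt5-1}<1$. Using $\sqrt5<9/4$ (since $(9/4)^2=81/16>5$) it is enough to show $\tfrac14 e^{5/4}<1$, that is, $e^{5/4}<4$, or equivalently $\ln 2>5/8$; this follows at once from the power series $\ln 2=\sum_{k\ge1}1/(k\,2^k)>1/2+1/8=5/8$. The main thing to guard against is numerical slackness in this final chain, but the margin is comfortable.
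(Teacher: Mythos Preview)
Your proof is correct. The reduction step agrees with the paper's: both arguments observe that the supremum is attained at the extremal ratio $h/r=\omega$, where $h/\Pi=1/\Xi$, so the claim becomes $\sinh(1/\Xi)<z_0$.

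Where you diverge is in the verification of this single inequality. The paper rewrites it as $1/\omega>\Phi(\omega\sqrt{1+z_0^2})$ and checks this numerically using the bound \eqref{stima_omega} (i.e.\ $52/100<\omega<53/100$), together with crude estimates on $\sqrt 5$ and on $\cosh$ at an explicit rational point. You instead manipulate the defining equation of $\Xi$ into the Lambert-$W$ form $ue^u=1/e$ with $u=2/\Xi-1$, express the target as $u>\sqrt 5-2$, and reduce this via monotonicity to the elementary inequality $\ln 2>5/8$. Your route is self-contained: it does not appeal to \eqref{stima_omega}, which the paper states without proof, and it yields a cleaner algebraic picture of why the bound holds. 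The paper's route is shorter to write down once \eqref{stima_omega} is granted, and it stays closer to the function $\Phi$ already in use elsewhere.
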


\begin{proof}
First of all, the largest value of $\sinh\frac{h}{\Pi}$ is the slope at $x=h$ of the catenary corresponding to the extremal condition $\frac{h}{r}=\omega$. More rigorously, it is easy to see that 
\[
\sup\left\{\sinh\frac{h}{\Pi}: \frac{h}{r}\le \omega\right\}=\sinh\frac{h}{\Pi_m}
\]
where 
\[
\Pi_m \cosh\frac{h}{\Pi_m}=\frac{h}{\omega}.
\]
Thus, we obtain 
$$
\sinh\frac{h}{\Pi_m}=\sqrt{\cosh^2\frac{h}{\Pi_m}-1}= \sqrt{\frac{h^2}{\Pi_m^2 \omega^2}-1}.
$$
The inequality 
\[
\sqrt{\frac{h^2}{\Pi_m^2 \omega^2}-1}<z_0
\]
is equivalent to
\begin{equation}\label{stima_Phi}
\frac{1}{\omega}>\Phi\left(\omega\sqrt{1+z_0^2}\right)
\end{equation}
where $\Phi$ is given by \eqref{Phi}. Using \eqref{stima_omega}, since $\frac{11}{5}<\sqrt 5<3$, we get
\[
\frac{52}{25\sqrt{10}}<\omega\sqrt{1+z_0^2}<\frac{53}{100}\sqrt 2<\text{argmin}\,\Phi.
\] 
Hence 
\[
\Phi\left(\omega\sqrt{1+z_0^2}\right)< \Phi\left(\frac{52}{25\sqrt{10}}\right)=\frac{25\sqrt{10}}{52}\cosh\frac{52}{25\sqrt{10}}<\frac{100}{52}<\frac{1}{\omega}
\]
where we have used the right inequality of \eqref{stima_omega}. This proves \eqref{stima_Phi}.
\end{proof}

\begin{lemma}
We have 
\begin{equation}\label{keyineq}
\frac{d}{dx}\left(\frac{1}{\rho_0}f'(\rho_0')\right)\ge 0.
\end{equation}
\end{lemma}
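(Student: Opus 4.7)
The plan is a direct computation followed by an explicit numerical estimate. I parametrize by $t := x/\Pi_0$ and set $c := \cosh t$, $s := \sinh t$, so that $\rho_0 = \Pi_0 c$ and $\rho_0' = s$. Using $1+(\rho_0')^2 = c^2$ and $2 + s^2 = 1 + c^2$, the formula $f'(u) = u(2+u^2)/(1+u^2)^{3/2}$ gives $f'(\rho_0') = s(1+c^2)/c^3$, whence
\[
\frac{1}{\rho_0}\,f'(\rho_0') = \frac{s(1+c^2)}{\Pi_0\,c^4} = \frac{1}{\Pi_0}\left(\frac{s}{c^2} + \frac{s}{c^4}\right).
\]

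Since $d/dx = (1/\Pi_0)\,d/dt$ with $ds/dt = c$ and $dc/dt = s$, a term-by-term differentiation using $s^2 = c^2 - 1$ to eliminate $s$ yields
\[
\frac{d}{dx}\left(\frac{1}{\rho_0}\,f'(\rho_0')\right) = \frac{4 - c^2 - c^4}{\Pi_0^2\,c^5}.
\]
Thus the lemma reduces to showing $c^2 + c^4 \le 4$ for $c = \cosh(x/\Pi_0)$ and $x \in [-h,h]$. Since cosh is even and increasing on $[0,\infty)$, the maximum of $c$ is $c_{\max} = \cosh(h/\Pi_0) = r/\Pi_0$, and the inequality becomes $c_{\max}^2 \le (\sqrt{17}-1)/2$.

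To bound $c_{\max}$ I would reuse the monotonicity argument of Lemma \ref{lemma_sup}: writing $\xi = \Pi_0/h$, $\xi$ is the largest root of $\mu(\xi) = 1/(\xi\cosh(1/\xi)) = h/r$, lying in the decreasing branch of $\mu$; hence smaller $h/r$ gives larger $\xi$ and smaller $\cosh(1/\xi)$, so $h/r \le \omega$ implies $\xi \ge \Xi$ and $c_{\max} \le \cosh(1/\Xi)$. From the defining equation of $\Xi$, a short manipulation (setting $T = 1/\Xi$, it collapses to $T(1+\tanh T) = 1$, giving $\cosh T = T/\sqrt{2T-1}$ and hence $\omega = T/\cosh T = \sqrt{2T-1}$) produces the closed form
\[
\cosh(1/\Xi) = \frac{1+\omega^2}{2\omega}.
\]

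Finally, since $\omega \mapsto (1+\omega^2)/(2\omega)$ has derivative $(\omega^2-1)/(2\omega^2) < 0$ on $(0,1)$, the lower bound $\omega > 52/100$ from \eqref{stima_omega} yields
\[
c_{\max}^2 \le \left(\frac{1+\omega^2}{2\omega}\right)^2 < \left(\frac{12704}{10400}\right)^2 < 1.493 < \frac{\sqrt{17}-1}{2} \approx 1.562,
\]
completing the proof. The main obstacle I expect is precisely this last step: a direct application of Lemma \ref{lemma_sup} gives only $\sinh^2(h/\Pi_0) < (\sqrt 5 - 1)/2$, hence $c^2 + c^4 < 2+\sqrt 5 \approx 4.24$, overshooting the target $4$. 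It is therefore essential to extract the sharper closed form $\cosh(1/\Xi) = (1+\omega^2)/(2\omega)$ and combine it with the tight numerical estimate \eqref{stima_omega}.
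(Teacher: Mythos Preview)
Your proof is correct and follows the same structure as the paper's: both compute the derivative explicitly (the paper via the auxiliary function $v(t)=\sinh t\,(1+\cosh^2 t)/\cosh^4 t$, you via the equivalent expression $s/c^2+s/c^4$), both reduce non-negativity to the condition $\cosh^2(h/\Pi_0)\le(\sqrt{17}-1)/2$, and both verify this under the standing hypothesis $h/r\le\omega$.

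The only substantive difference lies in the final numerical verification. The paper rewrites $h/\Pi_0\le\beta$ as $\Phi(h/\Pi_0)\ge\Phi(\beta)$ (using that both arguments lie in the decreasing branch of $\Phi$) and then simply asserts that this ``can be checked directly''. Your route is more transparent: you extract from the defining equation of $\Xi$ the identity $T(1+\tanh T)=1$ (correctly derived from $\tanh T+T\,\mathrm{sech}^2 T=1$ via $\mathrm{sech}^2 T=1-\tanh^2 T$ and cancellation of $1-\tanh T$), which yields the closed form $\cosh(1/\Xi)=(1+\omega^2)/(2\omega)$ and turns the check into an elementary application of \eqref{stima_omega}. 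Your observation that Lemma~\ref{lemma_sup} alone gives only $c^2+c^4<2+\sqrt5\approx4.24$, just missing the target $4$, is also accurate and explains why the sharper extraction is needed.
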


\begin{proof}
A straightforward computation shows that 
\[
\frac{d}{dx}\left(\frac{1}{\rho_0}f'(\rho_0')\right)=\frac{d}{dx}\left(v\left(\frac{\cdot}{\Pi_0}\right)\right)
\]
where 
$$
v:=\left\{
\begin{aligned}
    [0, +\infty)&\to \R\\
    x&\mapsto \frac{\sinh x(1+\cosh^2x)}{\cosh^4x}.
\end{aligned}
\right.
$$
We get
\[
v'(x) \geq 0\qquad \hbox{ if } \qquad x \in [0,\beta]
\]
where $\beta>0$ is such that $\cosh \beta=\sqrt{\frac{\sqrt{17}-1}{2}}$. Since 
\[
\frac{x}{\Pi_0}\le \frac{h}{\Pi_0}
\]
in order to have \eqref{keyineq}, it is sufficient to show that
\begin{equation}\label{stima_beta}
\frac{h}{\Pi_0} \le \beta.
\end{equation}
Precisely, \eqref{stima_beta} is equivalent to 
\[
\Phi\left(\frac{h}{\Pi_0}\right)\ge \Phi(\beta)
\]
which can be checked directly.
\end{proof}

\begin{proposition}\label{lemma_tilli}
Assume \eqref{cond_catenaria}. Let $\rho \in X$ be convex. For any $c>0$, it holds $\E_c(\rho\vee \rho_0)\le \E_c(\rho)$.
\end{proposition}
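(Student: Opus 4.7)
The plan is to reduce to a localization on each connected component of the open set $A:=\{\rho<\rho_0\}\subset(-h,h)$ and then combine the three preparatory lemmas. Since $\rho$ and $\rho_0$ are continuous with $\rho(\pm h)=\rho_0(\pm h)=r$, the set $A$ is open and on its complement $\rho\vee\rho_0=\rho$; writing $L_c(u,p):=u\sqrt{1+p^2}+cp^2/(u\sqrt{1+p^2})$, we have
\[
\E_c(\rho)-\E_c(\rho\vee\rho_0)=\int_A\bigl[L_c(\rho,\rho')-L_c(\rho_0,\rho_0')\bigr]\,dx,
\]
so it suffices to show the non-negativity of this integral on every connected component $(a,b)$ of $A$. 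Setting $\psi:=\rho-\rho_0$, we have $\psi(a)=\psi(b)=0$ and $\psi\le 0$ on $(a,b)$, which will power the forthcoming integrations by parts.

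The pointwise lower bound for the integrand rests on two tangent-line inequalities. Convexity of $p\mapsto\sqrt{1+p^2}$ gives $\sqrt{1+(\rho')^2}\ge\sqrt{1+(\rho_0')^2}+T\psi'$ with $T:=\rho_0'/\sqrt{1+(\rho_0')^2}$. By Lemma~\ref{lemma_sup}, $|\rho_0'|\le z_0$ on $[-h,h]$, so the tangent inequality \eqref{eq:dis_funzione_f}, extended to $y\in[-z_0,0]$ by the evenness of $f$, applies at $y=\rho_0'$ and yields $f(\rho')\ge f(\rho_0')+f'(\rho_0')\psi'$. Multiplying the first inequality by $\rho$ and combining with the second divided by $\rho$, one obtains
\[
L_c(\rho,\rho')-L_c(\rho_0,\rho_0')\ \ge\ \Bigl[\rho T+c\frac{f'(\rho_0')}{\rho}\Bigr]\psi'+\Bigl[\sqrt{1+(\rho_0')^2}-c\frac{f(\rho_0')}{\rho\rho_0}\Bigr]\psi.
\]

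Next I would integrate over $(a,b)$ and integrate by parts against $\psi$, whose boundary values vanish. For the area piece I split $\rho T=\rho_0T+\psi T$ and invoke the catenary first integral $(\rho_0T)'=\sqrt{1+(\rho_0')^2}$ to cancel the $\sqrt{1+(\rho_0')^2}\,\psi$ contribution. For the nematic piece I split $1/\rho=1/\rho_0-\psi/(\rho\rho_0)$; the $1/\rho_0$ contribution yields
\[
-c\int_a^b\Bigl(\frac{f'(\rho_0')}{\rho_0}\Bigr)'\psi\,dx\ \ge\ 0
\]
by the monotonicity \eqref{keyineq} combined with $\psi\le 0$, and the remaining first-order term $-c\int_a^b\psi f(\rho_0')/(\rho\rho_0)\,dx$ is also $\ge 0$ by inspection.

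The main obstacle will be that $L_c(u,p)$ is not jointly convex in $(u,p)$: the tangent-line bound for the area part therefore loses quadratic information and leaves a negative remainder of the form $-\tfrac12\int_a^b T'\psi^2\,dx$ (with $T'>0$), while the $\psi/(\rho\rho_0)$ correction in the nematic integration by parts produces a further quadratic-in-$\psi$ term of a priori indefinite sign. The hard step is to close the estimate by balancing these two quadratic contributions against the first-order nonnegative terms identified above, invoking the convexity of $\rho$ and the sharp bound $|\rho_0'|<z_0$ from Lemma~\ref{lemma_sup} under the geometric assumption \eqref{cond_catenaria}.
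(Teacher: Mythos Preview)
Your linearization is algebraically correct and you rightly flag the obstacle: after integrating by parts you are left with $-\tfrac12\int_a^b T'\psi^2\,dx\le 0$ from the area piece and a further quadratic contribution $-c\int_a^b \tfrac{f'(\rho_0')}{\rho\rho_0}\psi\psi'\,dx$ of indefinite sign from the nematic piece, and you offer no mechanism to absorb them. This is the crux, and the proposal stops exactly where the difficulty begins; the ``balancing'' you invoke is never carried out.

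The paper's proof sidesteps both problematic terms by two simple manoeuvres you are missing.

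For the area part it does \emph{not} linearize. Since $\rho_0$ is the global minimizer of $\E_0$ on $X$ (Theorem~\ref{mainApp}), the competitor equal to $\rho$ on the component $(a,b)$ and to $\rho_0$ elsewhere lies in $X$ and has $\E_0$ at least $\E_0(\rho_0)$; this unwinds to
\[
\int_a^b \rho_0\sqrt{1+(\rho_0')^2}\,dx\ \le\ \int_a^b \rho\sqrt{1+(\rho')^2}\,dx,
\]
with no remainder. This replaces your tangent-line computation and the identity $(\rho_0 T)'=\sqrt{1+(\rho_0')^2}$, and kills the term $-\tfrac12\int T'\psi^2$ at its source.

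For the nematic part, the paper first uses $\rho\le\rho_0$ on $(a,b)$ to throw away the favourable factor, $\tfrac{f(\rho')}{\rho}\ge\tfrac{f(\rho')}{\rho_0}$, and \emph{only then} applies \eqref{eq:dis_funzione_f}:
\[
\int_a^b\frac{f(\rho')-f(\rho_0')}{\rho_0}\,dx\ \ge\ \int_a^b\frac{f'(\rho_0')}{\rho_0}\,\psi'\,dx\ =\ -\int_a^b\Bigl(\frac{f'(\rho_0')}{\rho_0}\Bigr)'\psi\,dx\ \ge\ 0
\]
by \eqref{keyineq} and $\psi\le0$. Your expansion $1/\rho=1/\rho_0-\psi/(\rho\rho_0)$ is the wrong move; replacing $1/\rho$ by $1/\rho_0$ \emph{before} linearizing is precisely what makes the integration by parts close cleanly, with no quadratic tail.
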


\begin{proof}
We deal only with a special case, the other ones can be treated in a similar way. Assume that there exists $a\in (-h,h)$ such that $\rho<\rho_0$ on $(a,h)$ and $\rho(a)=\rho_0(a)$, a graphical representation is given in Figure \ref{fig:caso_tilli_lemma}.
\begin{figure}[htbp]
		\centering
		\begin{tikzpicture}[> = latex, scale=1]
\begin{scope}[xscale=1.5]      
\begin{scope}
\path [clip] (-1.9,1.5) rectangle (1.9,-1.1);  
\draw [red, thick] (0,3) circle [x radius = 1.87cm, y radius = 4cm];
\end{scope}

\begin{scope}
\path [clip] (-2.2,1.5) rectangle (2.2,-1.6);  
\draw [blue, thick] plot [smooth] coordinates { (-1.73,1.5) (-1.65,1.2) (-1.6,1.03) (-1.5,0.75) (-1.25,0.15) (-1,-0.35) (-0.75,-0.75) (-0.5,-1.055) (-0.25,-1.195)  (0,-1.235) (0.25,-1.195) (0.5,-1.055) (0.75,-0.75) (1,-0.35) (1.25,0.15) (1.5,0.75) (1.6,1.03) (1.65,1.2) (1.73,1.5)};
\end{scope}
\filldraw [black] (0.94,-0.45) circle (1.2pt);
\filldraw [black] (-0.94,-0.45) circle (1.2pt);
\filldraw [black] (1.73,1.5) circle (1.2pt);
\draw [xshift=-.5em,dashed] (-1.56,-1.87) -- (-1.56,1.5);
\draw [xshift=-.5em,dashed] (1.92,-1.87) -- (1.92,1.5);
\path (-1.58,-1.9)node[below=1ex]{$x=-h$} 
(1.58,-1.9)node[below=1ex]{$x=h$};            
\draw node at (0,-0.7) {\textcolor{red}{$\rho_c$}};
\draw node at (0,-1.6) {\textcolor{blue}{$\rho_0$}};
\draw node at (1.3,-0.7) {\footnotesize{\textcolor{black}{$(a, \rho(a))$}}};
\draw node at (1.39,1.52) {\footnotesize{\textcolor{black}{$(h,r)$}}};
\end{scope}
\end{tikzpicture}
	\caption{Graphical representation of a possible case considered in Proposition \ref{lemma_tilli}.}
 \label{fig:caso_tilli_lemma}
\end{figure}
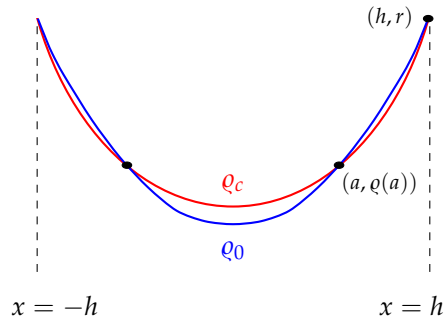
We show that 
\begin{equation}\label{tilli}
\bigintsss_a^h\left(\rho_0\sqrt{1+(\rho_0')^2}+c\frac{(\rho_0')^2}{\rho_0\sqrt{1+(\rho_0')^2}}\right)\,dx \le \bigintsss_a^h\left(\rho\sqrt{1+(\rho')^2}+c\frac{(\rho')^2}{\rho\sqrt{1+(\rho')^2}}\right)\,dx.
\end{equation}

First of all, if $\widetilde \rho \in X$ is given by 
\[
\widetilde \rho:=
\left\{
\begin{aligned}
&\rho_0 &&\hbox{on } [-h,a],\\
&\rho &&\hbox {on } (a,h],
\end{aligned}
\right.
\]
then, applying Theorem \ref{mainApp}, we get $\E_0(\rho_0)<\E_0(\widetilde \rho)$ which reduces \eqref{tilli} to
\[
\int_a^h\rho_0\sqrt{1+(\rho_0')^2}\,dx < \int_a^h\rho\sqrt{1+(\rho')^2}\,dx.
\]
As a consequence, since $\rho_0\ge \rho_c$ on $[a,h]$, in order to prove \eqref{tilli} it is sufficient to show that 
\begin{equation}\label{tilli1}
\int_a^h\frac{(\rho_0')^2}{\rho_0\sqrt{1+(\rho_0')^2}}\,dx \le \int_a^h\frac{(\rho')^2}{\rho_0\sqrt{1+(\rho')^2}}\,dx.
\end{equation}
Inequality \eqref{tilli1} reads as 
\begin{equation}
    \label{eq:tesi_to_show}
    \int_a^h\frac{1}{\rho_0}\left(f(\rho') - f(\rho_0')\right)\,dx \geq 0,
\end{equation}
where the function $f$ is defined in \eqref{eq:f}. Using \eqref{sup}, \eqref{keyineq} and \eqref{eq:dis_funzione_f}, we get 
\[
    \begin{aligned}
        \int_a^h\frac{1}{\rho_0}\left(f(\rho') - f(\rho_0')\right)\,dx &\geq \int_a^h\frac{1}{\rho_0}f'(\rho_0') \left(\rho' -\rho'_0\right)\,dx \\
        &= \frac{1}{\rho_0}f'(\rho_0')(\rho-\rho_0)\bigg|_a^h-\int_a^h \frac{d}{dx}\left(\frac{1}{\rho_0}f'(\rho_0')\right)(\rho-\rho_0)\,dx\\
        & = -\int_a^h\frac{d}{dx} \left(\frac{1}{\rho_0}f'(\rho_0')\right)(\rho-\rho_0)\,dx\ge 0
    \end{aligned}
\]
and this ends the proof of \eqref{tilli}.
\end{proof}

Thus, we are in position to show the main result of this section.

\begin{theorem}
    \label{thm:esistenza_minimo_Ec}
Assume \eqref{cond_catenaria}. For all $c >0$, the functional $\E_c$ admits a minimizer. 
\end{theorem}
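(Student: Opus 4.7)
The plan is to apply the direct method, where the key points are to promote a minimizing sequence to one of convex functions bounded between $\rho_0$ and $r$, to upgrade this to a uniform $W^{1,\infty}$ bound, and to check continuity of $\E_c$ along the resulting limit.

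Let $(\rho_j)\subset X$ be a minimizing sequence for $\E_c$. First I would replace $\rho_j$ by its convex envelope $\rho_j^\ast$: since $\rho_j^\ast\in X$ with $\rho_j^\ast(\pm h)=r$, Proposition \ref{lemmaconvexity-affine_con} gives $\E_c(\rho_j^\ast)\le \E_c(\rho_j)$, so we may assume each $\rho_j$ is convex. Next I would replace $\rho_j$ by $\rho_j\vee \rho_0$, which still lies in $X$ and is still convex (as the maximum of two convex functions with matching boundary values $r$). By Proposition \ref{lemma_tilli} applied piecewise on the subintervals where $\rho_j<\rho_0$, the energy does not increase, so after relabelling the minimizing sequence satisfies $\rho_0\le \rho_j\le r$ on $[-h,h]$, the upper bound coming from convexity and the boundary condition.

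Now I would extract a uniform $W^{1,\infty}$ bound. Because each $\rho_j$ is convex with $\rho_j(\pm h)=r$ and $\rho_j\le r$, its derivative is nondecreasing with $\rho_j'(h^-)\ge 0$ and $\rho_j'(-h^+)\le 0$, hence $|\rho_j'|\le \max\{\rho_j'(h^-),-\rho_j'(-h^+)\}$. For the right endpoint, since $\rho_j\ge \rho_0$ on $[-h,h]$ with equality at $h$, we have $r-\rho_j(x)\le r-\rho_0(x)$ for $x$ close to $h$, so dividing by $h-x$ and passing to the limit gives $\rho_j'(h^-)\le \rho_0'(h)=\sinh(h/\Pi_0)$. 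A symmetric argument at $-h$ yields $|\rho_j'|\le \sinh(h/\Pi_0)$ uniformly in $j$ and in $x$. By Ascoli-Arzel\`a, a subsequence converges uniformly on $[-h,h]$ to some convex $\rho\in W^{1,\infty}(-h,h)$ with $\rho(\pm h)=r$ and $\rho\ge \rho_0>0$, so $\rho\in X$.

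It remains to show $\E_c(\rho)\le \liminf_j \E_c(\rho_j)$, and in fact $\E_c$ is continuous along this sequence. By Lemma \ref{lemma_conv} applied on any $[\gamma,\delta]\cc (-h,h)$ we have $\rho_j'\to \rho'$ in $L^1(\gamma,\delta)$; combining this with the uniform bound $|\rho_j'|\le \sinh(h/\Pi_0)$ and the dominated convergence theorem extends the convergence to the full interval $(-h,h)$. Since $\rho\ge \rho_0\ge \Pi_0>0$, the factor $1/\rho$ appearing in the nematic term is uniformly bounded, and the integrands $\rho\sqrt{1+(\rho')^2}$ and $\frac{(\rho')^2}{\rho\sqrt{1+(\rho')^2}}$ are continuous functions of $\rho$ and $\rho'$ dominated by constants. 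Lemma \ref{lemma_cont} then yields $\E_c(\rho_j)\to \E_c(\rho)$, concluding that $\rho$ is a minimizer.

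The main obstacle is the uniform bound on $\rho_j'$ up to the boundary: for functions in $W^{1,1}$ without the barrier information, the slope could blow up at $\pm h$, and uniform convergence on $[-h,h]$ (hence passage of the boundary conditions to the limit) would fail. The delicate work of Section \ref{sec:E0} culminating in Proposition \ref{lemma_tilli}, together with the quantitative estimate \eqref{sup} controlling $\sinh(h/\Pi_0)$ under hypothesis \eqref{cond_catenaria}, is precisely what gives this control; without \eqref{cond_catenaria} the catenary barrier would cease to exist and the strategy would break down.
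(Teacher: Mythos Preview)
Your proof is correct and follows essentially the same approach as the paper's: convexify, cut from below by $\rho_0$ via Proposition~\ref{lemma_tilli}, deduce a uniform $W^{1,\infty}$ bound from the barrier, extract a uniform limit by Ascoli--Arzel\`a, and pass to the limit using Lemmas~\ref{lemma_conv} and~\ref{lemma_cont}. You spell out the slope comparison $\rho_j'(h^-)\le \rho_0'(h)$ and the extension of the $L^1$ convergence of the derivatives to the full interval more explicitly than the paper does, but the strategy is the same.
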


\begin{proof}
Let $(\rho_j)$ be a  minimizing sequence for $\E_c$. By Proposition  \ref{lemmaconvexity-affine_con}, we can assume that $\rho_j$ is convex. In particular, $\rho_j\le r$.  
By Proposition \ref{lemma_tilli}, we can also assume that $\rho_j\ge \rho_0$ for any $j\in \N$, hence $\|\rho_j\|_\infty$ is bounded. For the same reason, by convexity we also have that $\|\rho_j'\|_\infty$ is bounded. Thus, we can apply Ascoli-Arzel\`a Theorem on $[-h,h]$ getting $\rho_j \to \overline\rho$ (up to a subsequence, not relabeled) uniformly on $[-h,h]$ for some $\overline\rho\in W^{1,1}(-h,h)$. 
In particular,  $\overline\rho$ is convex, and $\rho_0\le \overline\rho\le r$, and then $\overline\rho(\pm h) = r$. Hence, $\overline\rho\in X$.
Finally, thanks to Lemma \ref{lemma_conv} and Lemma \ref{lemma_cont}, the functional $\E_c$ is continuous with respect to the strong $W^{1,1}$ convergence. By the Direct method of the Calculus of Variations, we conclude that $\overline\rho$ is a minimizer of $\E_c$. 
\end{proof}
In what follows, for simplicity, for all $c\geq 0$, we denote any minimizer of $\mathcal{E}_c$ with $\rho_c$.

\begin{remark}\label{bound_derivata}
Let $\rho_c$ be a minimizer of $\E_c$. Combining Lemma \ref{lemma_sup} with Proposition \ref{lemma_tilli}, we can say that $|\rho_c'|<z_0$.
\end{remark}

\section{Properties of minimizers of \texorpdfstring{$\E_c$}{Ec}}
\label{sec:proprieta_geometriche}

In this section, we conclude the proof of Theorem \ref{main_total}. 

\begin{theorem}
Let $\rho_c$ be a minimizer of $\E_c$. Then $\rho_c\in C^2([-h,h])$ and \eqref{EL} and \eqref{eq:integrale_primo} hold true. Moreover, $\rho_c$ is even and convex.
\end{theorem}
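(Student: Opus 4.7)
The plan is to follow the bootstrap pattern outlined in the strategy: derive the weak Euler--Lagrange equation, upgrade $\rho_c$ to $C^2$, and then use the first integral to obtain convexity and symmetry. Throughout I rely on the slope bound $|\rho_c'|<z_0<\sqrt{2}$ from Remark~\ref{bound_derivata}, and on the fact that (by the proof of Theorem~\ref{thm:esistenza_minimo_Ec}) we may assume $\rho_c$ is convex with $\rho_c\ge\rho_0>0$ on $[-h,h]$. Writing the integrand of $\E_c$ as
\[
L(u,p)=u\sqrt{1+p^2}+\frac{c\,p^2}{u\sqrt{1+p^2}},
\]
the lower bound $\rho_c\ge \rho_0>0$ makes $L$ smooth along $(\rho_c,\rho_c')$ and allows perturbing by any $\varphi\in C^1_c(-h,h)$, since then $\rho_c+t\varphi\in X$ for small $|t|$. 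The first variation of $\E_c$ vanishes and yields
\[
\int_{-h}^h\bigl(L_u(\rho_c,\rho_c')\,\varphi+L_p(\rho_c,\rho_c')\,\varphi'\bigr)\,dx=0,
\]
and setting $G(x)=\int_0^x L_u(\rho_c,\rho_c')\,dy$, the Du~Bois--Reymond lemma produces a constant $k$ with $L_p(\rho_c,\rho_c')=G(x)+k$ a.e.

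A direct computation then shows
\[
L_{pp}(u,p)=\frac{u}{(1+p^2)^{3/2}}+\frac{c\,(2-p^2)}{u\,(1+p^2)^{5/2}},
\]
which is strictly positive whenever $p^2<2$, so $p\mapsto L_p(\rho_c(x),p)$ is a $C^1$-diffeomorphism and the implicit function theorem inverts $L_p(\rho_c,\rho_c')=G(x)+k$ to give $\rho_c'$ as a continuous function of $x$ and $\rho_c(x)$. A standard bootstrap then gives $\rho_c\in C^2([-h,h])$ up to the boundary, and the weak equation becomes the classical $\tfrac{d}{dx}L_p=L_u$, which after algebraic manipulation is exactly \eqref{EL}. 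Solving \eqref{EL} for $\rho_c''$ yields
\[
\rho_c''=\frac{(1+(\rho_c')^2)\bigl((c+\rho_c^2)(\rho_c')^2+\rho_c^2\bigr)}{\rho_c\bigl(\rho_c^2(1+(\rho_c')^2)+c(2-(\rho_c')^2)\bigr)}>0,
\]
since every factor is strictly positive (the denominator uses once more $(\rho_c')^2<z_0^2<2$); this proves strict convexity. Since $L$ is autonomous, the Hamiltonian $L-\rho_c'L_p$ is a constant $K$ and a direct simplification yields the first integral $\tfrac{\rho_c}{\sqrt{1+(\rho_c')^2}}-\tfrac{c(\rho_c')^2}{\rho_c(1+(\rho_c')^2)^{3/2}}=K$. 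Finally, strict convexity together with $\rho_c(\pm h)=r$ forces a unique interior minimum $x_\ast\in(-h,h)$ with $\rho_c'(x_\ast)=0$; reading the first integral as an autonomous Cauchy problem for $(\rho_c')^2$ as a function of $\rho_c$, uniqueness at $x_\ast$ forces $\rho_c$ to be symmetric about $x_\ast$, and the boundary conditions then force $x_\ast=0$. Evaluating the first integral at $x=0$ identifies $K=\rho_c(0)$ and gives \eqref{eq:integrale_primo}.

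The main obstacle is the regularity step: both the invertibility of $p\mapsto L_p(u,p)$ and the positivity of the denominator in the expression for $\rho_c''$ rely on keeping $(\rho_c')^2$ strictly below $2$. Without the quantitative bound $|\rho_c'|<z_0$, which stems from the catenary comparison in Proposition~\ref{lemma_tilli} combined with Lemma~\ref{lemma_sup}, the Legendre transform of $L$ in $p$ could degenerate and the inversion bootstrap would fail; moreover strict convexity of $\rho_c$ would no longer be automatic from \eqref{EL}, so the symmetry argument, which uses convexity to locate the critical point $x_\ast$, would collapse as well.
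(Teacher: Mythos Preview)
Your proposal is correct and follows essentially the same route as the paper: Du~Bois--Reymond on the first variation, invert $L_p$ via the implicit function theorem using the slope bound $|\rho_c'|<z_0<\sqrt 2$, bootstrap to $C^2$, read off \eqref{EL}, deduce $\rho_c''>0$ from the explicit quotient, and obtain the first integral from autonomy. The only genuine difference is in the symmetry step: the paper evaluates the first integral at $x=\pm h$ to get $\rho_c'(-h)=-\rho_c'(h)$ and then invokes uniqueness for the backward Cauchy problem from $x=h$ to conclude $\rho_c(x)=\rho_c(-x)$, whereas you locate the unique interior critical point $x_\ast$ by strict convexity and argue symmetry about $x_\ast$ via uniqueness for the autonomous problem there, forcing $x_\ast=0$ from the boundary data. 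Both arguments are sound; yours is slightly more geometric, the paper's slightly more direct in identifying $\Pi_c=\rho_c(0)$. Your phrasing ``Cauchy problem for $(\rho_c')^2$ as a function of $\rho_c$'' is a bit loose at $x_\ast$ where $\rho_c'=0$; it is cleaner to say that $\rho_c$ and $x\mapsto\rho_c(2x_\ast-x)$ both solve the second-order ODE \eqref{EL} with the same data at $x_\ast$, and then conclude by ODE uniqueness.
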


\begin{proof}
For simplicity of notation, we let $\rho=\rho_c$. Let $\phi\in C^1_{c}(-h,h)$. For any $\sigma >0$ small enough, we have that $\rho+t\phi \in X$ for each $t \in (- \sigma, \sigma)$. Then, we easily obtain 
\[
0=\frac{d}{dt}_{\big|_{t=0}} \E_c(\rho + t\phi) =\bigintsss_{-h}^h \left(\frac{\rho \rho'}{\sqrt{1 + (\rho')^2}} + c\, \frac{2 \rho \rho'(1 + (\rho')^2)  -\rho (\rho')^3}{\rho^2 \left(1 + (\rho')^2\right)^{\frac{3}{2}}}-\Theta\right)\phi'\, dx
\]
where
    $$\Theta(x):= \bigintsss_{-h}^x \sqrt{1 + (\rho')^2} -c\, \frac{(\rho')^2}{\rho^2\sqrt{ 1 + (\rho')^2}}\, d\vartheta.
    $$
    Applying Du Bois-Reymond's Lemma, we deduce that 
\begin{equation}
    \label{eq:integrale_primo_variazionale}
    \frac{\rho \rho'}{\sqrt{1 + (\rho')^2}} + c\,\frac{2 \rho \rho'(1 + (\rho')^2)  -\rho (\rho')^3}{\rho^2 (1 + (\rho')^2)^{\frac{3}{2}}}- \Theta = \Gamma, \qquad \text{a.e.\,on $(-h,h)$},
\end{equation}
where $\Gamma\in \R$. 
Using the Implicit Function Theorem, we obtain 
\[
\rho'=g(\rho,\Theta+\Gamma), \qquad \text{a.e.\,on $(-h,h)$},
\]
for some smooth function $g$. As a consequence, $\rho'\in W^{1,1}(-h,h)$, which means $\rho\in W^{2,1}(-h,h)$. Differentiating \eqref{eq:integrale_primo_variazionale}, we get  
$$
(1+(\rho')^2)((c+\rho^2) (\rho')^2+\rho^2)=\rho\rho'' (\rho^2(\rho')^2+c(2-(\rho')^2)+\rho^2), \qquad \text{a.e.\,on $(-h,h)$}.
$$
Notice that the left-hand side is strictly positive, while the term
\[
\rho^2(\rho')^2+c(2-(\rho')^2)+\rho^2
\]
is strictly positive as well since $|\rho'|<z_0<\sqrt 2$. Hence 
\[
\rho''=\frac{(1+(\rho')^2)((c+\rho^2) (\rho')^2+\rho^2)}{\rho(\rho^2(\rho')^2+c(2-(\rho')^2)+\rho^2)}> 0
\]
which means that $\rho \in C^2([-h,h])$ and that $\rho$ is strictly convex. Since the Lagrangian is autonomous, as a standard consequence the Euler-Lagrange equation \eqref{EL} has the prime integral 
\[
c\frac{(\rho')^2}{\rho(1+(\rho')^2)^{3/2}}-\frac{\rho}{\sqrt{1+(\rho')^2}}=-\Pi_c
\]
for some constant $\Pi_c>0$. In particular, 
\[
c\frac{(\rho'(\pm h))^2}{r(1+(\rho'(\pm h))^2)^{3/2}}-\frac{r}{\sqrt{1+(\rho'(\pm h))^2}}=-\Pi_c
\]
from which $\rho'(-h)=-\rho'(h)$. By applying again the Implicit Function Theorem, we also deduce that $\rho'(h)=\tau(r,c)$ for some smooth function $\tau$. As a consequence, $\rho$ is a solution of the backward second order Cauchy problem
\[
\left\{\begin{array}{ll}
\displaystyle \rho''=\frac{(1+(\rho')^2)((c+\rho^2) (\rho')^2+\rho^2)}{\rho(\rho^2(\rho')^2+c(2-(\rho')^2)+\rho^2)}\\
\rho(h)=r\\
\rho'(h)=\tau(r,c).
\end{array}\right.
\]
On the other hand, $\tilde\rho(x)=\rho(-x)$ solves the same problem: notice that $\Pi_c$ is the minimum of $\tilde \rho$ as well, which means that $\tilde\rho'(h)=-\rho'(-h)=\rho'(h)=\tau(r,c)$. Hence $\rho=\tilde \rho$, namely $\rho$ is even. In particular, $\Pi_c=\rho_c(0)$ and the proof is complete.
\end{proof}

\begin{theorem}
Let $\rho_c$ be a minimizer of $\E_c$. Then $\rho_c > \rho_0$ on $(-h,h)$. 
\end{theorem}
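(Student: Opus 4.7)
The plan is a two-step argument: first, establish the weak inequality $\rho_c \geq \rho_0$ on $[-h,h]$, and then upgrade it to strict inequality on $(-h,h)$ by an ODE comparison at any hypothetical interior touching point.

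For the weak inequality, I would use that any minimizer is convex, so Proposition \ref{lemma_tilli} applies and gives $\mathcal{E}_c(\rho_c \vee \rho_0) \leq \mathcal{E}_c(\rho_c)$. Inspecting the proof of that proposition, the area contribution reduces to an instance of Theorem \ref{mainApp}, which is a \emph{strict} inequality whenever the competitor differs from $\rho_0$; hence if $\rho_c \vee \rho_0 \neq \rho_c$, i.e.\ if $\rho_c < \rho_0$ on a nontrivial set, then $\mathcal{E}_c(\rho_c \vee \rho_0) < \mathcal{E}_c(\rho_c)$, contradicting the minimality of $\rho_c$. Thus $\rho_c \geq \rho_0$ on $[-h,h]$.

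For the strict inequality, I would argue by contradiction and suppose that $\rho_c(x_0) = \rho_0(x_0)$ for some $x_0 \in (-h,h)$. Since $\rho_c - \rho_0 \geq 0$ on $[-h,h]$ and vanishes at the interior point $x_0$, and both functions are $C^2$, the point $x_0$ is an interior minimizer of $\rho_c - \rho_0$, yielding $\rho_c'(x_0) = \rho_0'(x_0) =: p$ and $\rho_c''(x_0) \geq \rho_0''(x_0)$. Setting $\rho := \rho_c(x_0) = \rho_0(x_0)$, the catenary ODE $\rho_0 \rho_0'' = 1 + (\rho_0')^2$ gives $\rho_0''(x_0) = (1+p^2)/\rho$, while the Euler-Lagrange equation \eqref{EL} for $\rho_c$ evaluated at $x_0$ yields
\[
\rho_c''(x_0) = \frac{(1+p^2)\bigl((c+\rho^2)p^2+\rho^2\bigr)}{\rho\bigl(\rho^2 p^2 + c(2-p^2) + \rho^2\bigr)}.
\]
A short algebraic simplification produces
\[
\rho_c''(x_0) - \rho_0''(x_0) = \frac{2c\,(1+p^2)(p^2-1)}{\rho\bigl(\rho^2 p^2 + c(2-p^2) + \rho^2\bigr)}.
\]
By Remark \ref{bound_derivata} we have $|p| < z_0 < 1$, so $p^2 - 1 < 0$; since $c > 0$ and the denominator is positive, the right-hand side is strictly negative, contradicting $\rho_c''(x_0) \geq \rho_0''(x_0)$.

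The main point requiring care is the weak step: extracting a \emph{strict} energy drop from Proposition \ref{lemma_tilli} whenever $\rho_c < \rho_0$ on a nontrivial set, by tracking how the strictness of Theorem \ref{mainApp} propagates through the truncation argument. The ODE comparison itself is then a direct algebraic check, with the decisive input being the a priori slope bound $|\rho_c'| < z_0 < 1$ from Remark \ref{bound_derivata}, which is precisely what makes the factor $p^2 - 1$ negative and closes the contradiction.
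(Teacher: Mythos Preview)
Your proposal is correct, and your second step is actually more direct than the paper's. The paper proceeds in two stages: it first uses the \emph{first integral} \eqref{eq:integrale_primo} to show that any interior touching point must satisfy $\bar x=0$ (substituting $\rho_c(\bar x)=\Pi_0\cosh(\bar x/\Pi_0)$ and $\rho_c'(\bar x)=\sinh(\bar x/\Pi_0)$ into \eqref{eq:integrale_primo} forces $\sinh(\bar x/\Pi_0)=0$), and only then compares second derivatives at $0$, where $p=0$ and the computation is simpler. You instead compare second derivatives at an arbitrary touching point $x_0$ using the Euler--Lagrange equation \eqref{EL} directly, and your identity
\[
\rho_c''(x_0)-\rho_0''(x_0)=\frac{2c\,(1+p^2)(p^2-1)}{\rho\bigl(\rho^2 p^2+c(2-p^2)+\rho^2\bigr)}
\]
together with the slope bound $|p|<z_0<1$ from Remark \ref{bound_derivata} closes the contradiction in one stroke. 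This bypasses the first integral entirely and is arguably cleaner; the paper's route, on the other hand, shows that the first integral already severely constrains the geometry of touching. Your handling of the weak inequality $\rho_c\ge\rho_0$ is also slightly more careful than the paper's: you explicitly track that the strictness of Theorem \ref{mainApp} propagates through Proposition \ref{lemma_tilli}, whereas the paper simply cites the latter.
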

\begin{proof}
By Lemma \ref{lemma_tilli}, we already know that $\rho_c \ge \rho_0$ on $(-h,h)$. Assume, by contradiction, that there is $\overline x\in (-h,h)$ such that   $\rho_c(\overline x)=\rho_0(\overline x)$. 
    \\
    \\
    {\it Step 1.} We claim that $\overline x = 0$. First of all, since $\rho_c(\overline x)=\rho_0(\overline x)$, since both $\rho_c$ and $\rho_0$ are of class $C^1$ and since $\rho_c\ge \rho_0$ we deduce that $\rho_c'(\overline x)=\rho_0'(\overline x)$. Taking into account that 
    \[
    \rho_c(\overline x)=\rho_0(\overline x)=\Pi_0\cosh\frac{\overline x}{\Pi_0}, \qquad \rho_c'(\overline x)=\rho_0'(\overline x)=\sinh\frac{\overline x}{\Pi_0}
    \]
    and substituting in \eqref{eq:integrale_primo} we obtain 
\[
     c\frac{\sinh^2\frac{\overline x}{\Pi_0}}{\Pi_0\cosh\frac{\overline x}{\Pi_0}\left(1+\sinh^2\frac{\overline x}{\Pi_0}\right)^{3/2}}-\frac{\Pi_0\cosh\frac{\overline x}{\Pi_0}}{\sqrt{1+\sinh^2\frac{\overline x}{\Pi_0}}}=-\rho_c(0)
     \]
     which simplifies in 
        \[
         c\frac{\sinh^2\frac{\overline x}{\Pi_0}}{\Pi_0\cosh^4\frac{\overline x}{\Pi_0}}=-\rho_c(0) + \Pi_0\le 0.
    \]
This means that $\sinh^2\frac{\overline x}{\Pi_0}=0$ namely $\overline x=0$.
    \\
    \\
    {\it Step 2.} We now conclude the proof proving that assuming $\Pi_0=\rho_c(0)$, we get a contradiction. Since $\rho'_c(0) =0 =  \rho'_0(0)$ and since $\rho_c\ge \rho_0$, by Taylor expansion we obtain 
    \begin{equation}\label{taylor}
    \rho''_c(0) \geq \rho_0''(0)=\frac{1}{\Pi_0}.
    \end{equation}
    Using \eqref{EL}, we easily find 
    \[
    \rho_c''(0)=\frac{\rho_c(0)}{2 c + \rho_c(0)^2}.
    \]
    As a consequence, \eqref{taylor} reduces to
    \begin{equation}
\label{eq:dis_derivata_seconda}
\frac{\rho_c(0)}{2 c + \rho_c(0)^2}\geq \frac{1}{\Pi_0}
    \end{equation}
    that implies $\rho_c(0)<\Pi_0$, which is a contradiction.
\end{proof}

Finally, we show that $\rho_c$ uniformly converges to $r$ as $c$ goes to infinity.

\begin{proposition}
    \label{prop:gamma_convergence}
    Let $(c_j)$ be a positive sequence with $c_j \to +\infty$. Let $\rho_j$ be a minimizer of $\E_{c_j}$. Then $\rho_j \to r$ uniformly on $[-h,h]$  as $j \to +\infty$.
  
\end{proposition}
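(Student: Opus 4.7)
The plan is to exploit the constant function $r$ as a test competitor. Since $r\in X$, by minimality of $\rho_j$ we have
\[
c_j \int_{-h}^h \frac{(\rho_j')^2}{\rho_j\sqrt{1+(\rho_j')^2}}\,dx \;\le\; \mathcal{E}_{c_j}(\rho_j) \;\le\; \mathcal{E}_{c_j}(r) \;=\; 2hr,
\]
so the nematic contribution vanishes at rate $1/c_j$. This is the quantitative engine of the proof; the remaining task is to translate smallness of this integral into uniform smallness of $\rho_j - r$.

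The second step converts the above bound into an $L^2$ bound on $\rho_j'$. From the a priori information gathered in Section \ref{sec:esistenza_minimi_Ec} one has $\rho_0 \le \rho_j \le r$ (Proposition \ref{lemma_tilli}) and $|\rho_j'| < z_0$ (Remark \ref{bound_derivata}). These pointwise bounds yield
\[
\frac{(\rho_j')^2}{\rho_j\sqrt{1+(\rho_j')^2}} \;\ge\; \frac{(\rho_j')^2}{r\sqrt{1+z_0^2}},
\]
hence $\|\rho_j'\|_{L^2(-h,h)}^2 \le 2hr^2\sqrt{1+z_0^2}/c_j \to 0$. By Cauchy--Schwarz this also forces $\|\rho_j'\|_{L^1(-h,h)} \to 0$.

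The third step is immediate: using the boundary value $\rho_j(h)=r$ and the fundamental theorem of calculus,
\[
\sup_{x\in[-h,h]}|\rho_j(x) - r| \;=\; \sup_{x\in[-h,h]}\Bigl|\int_x^h \rho_j'(t)\,dt\Bigr| \;\le\; \|\rho_j'\|_{L^1(-h,h)} \;\longrightarrow\; 0,
\]
which is exactly the claimed uniform convergence.

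I do not anticipate any genuine obstacle; the only subtle ingredient is the lower bound on the nematic integrand, which relies crucially on the uniform bounds $\rho_j\le r$ and $|\rho_j'|<z_0$ from Section \ref{sec:esistenza_minimi_Ec}. Conceptually, this argument can be rephrased as a $\Gamma$-convergence statement: the rescaled functionals $c_j^{-1}\mathcal{E}_{c_j}$ $\Gamma$-converge, with respect to the strong $W^{1,1}$ topology restricted to $X$, to $\mathcal{F}(\rho):=\int_{-h}^h (\rho')^2/(\rho\sqrt{1+(\rho')^2})\,dx$, whose unique minimizer in $X$ is the constant $r$ (it is nonnegative and vanishes only when $\rho'\equiv 0$, while $\rho(\pm h)=r$ forces $\rho\equiv r$). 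The explicit comparison with $r$ above is essentially an effective form of this $\Gamma$-convergence statement.
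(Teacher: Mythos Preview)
Your proof is correct and, in fact, more direct than the paper's own argument. The paper proceeds via an abstract $\Gamma$-convergence scheme: it introduces the compact set $Y=\{\rho\in W^{1,1}(-h,h):\rho\text{ convex},\ \rho_0\le\rho\le r,\ \rho(\pm h)=r\}$, extracts a uniformly convergent subsequence by Ascoli--Arzel\`a, and then shows the limit minimizes the purely nematic functional $\mathcal F_\infty(\rho)=\int_{-h}^h(\rho')^2/(\rho\sqrt{1+(\rho')^2})\,dx$ by the usual liminf/limsup comparison, concluding that the limit must equal $r$. You bypass the compactness step entirely: comparing directly with the competitor $r$ and using the pointwise bounds $\rho_j\le r$, $|\rho_j'|<z_0$ from Section~\ref{sec:esistenza_minimi_Ec}, you obtain an explicit estimate $\|\rho_j'\|_{L^2}^2\le 2hr^2\sqrt{1+z_0^2}/c_j$, which immediately forces uniform convergence via the fundamental theorem of calculus. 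Your route is more elementary, yields a quantitative rate, and avoids passing to subsequences; the paper's route is more in the spirit of variational convergence and would adapt more readily if one did not have such tight pointwise control on $\rho_j'$. Both rely in the same essential way on the a priori bounds of Section~\ref{sec:esistenza_minimi_Ec}.
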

\begin{proof}
Define 
$$
Y:= \{\rho \in W^{1,1}(-h,h), \rho \hbox{ convex }, \rho_0 \leq \rho\leq r, \rho(-h) =\rho(h)=r\}
$$ 
and the energy functionals $\mathcal F_j,\mathcal F_\infty\colon Y\to \R$ given by 
\[
\mathcal F_j(\rho)=\frac{1}{c_j}\mathcal{E}_{c_j} (\rho), \qquad \mathcal F_\infty(\rho)=\bigintsss_{-h}^h \frac{(\rho')^2}{\rho \sqrt{1 + (\rho')^2}}\, dx.
\]
Obviously, $\rho=r$ is the unique minimizer of $\mathcal{F}_{\infty}$. 
By Ascoli-Arzel\`a Theorem, $Y$ is compact with respect to the uniform convergence, thus $\rho_j \to \rho_{\infty}$ uniformly on $[-h,h]$ as $j \to \infty$ for some $\rho_\infty\in Y$. On the other hand, combining Lemma \ref{lemma_conv} and Lemma \ref{lemma_cont}, we obtain
    \begin{equation}
    \label{eq:liminf}
        \liminf_{j\to+\infty} \mathcal{F}_{j}(\rho_j) \geq  \liminf_{j\to+\infty} \int_{-h}^{h} \frac{(\rho_j')^2}{\rho_j \sqrt{1 + (\rho')^2}} = \int_{-h}^{h} \frac{(\rho_\infty')^2}{\rho_\infty \sqrt{1 + (\rho_\infty')^2}}=\mathcal F_\infty(\rho_\infty).
    \end{equation}
Let $\rho \in Y$. Then, since $\rho_j$ is also a minimizer of $\mathcal F_j$, using \eqref{eq:liminf}, we deduce that 
$$
\mathcal F_\infty(\rho_\infty) \leq \liminf_{j\to+\infty} \mathcal F_j(\rho_j) \leq \liminf_{j\to+\infty} \frac{1}{c_j} \E_{c_j} (\rho) = \mathcal F_\infty(\rho).
$$
We get that $\rho_{\infty}=r$, and this concludes the proof.
\end{proof}

\begin{appendices}
    \section{Derivation of the energy functional \texorpdfstring{$\E_c$}{Ec}}
    \label{sec-phy}
    
In this Appendix, we derive the energy functional \eqref{energy_functional} specializing 
\begin{equation}\label{funz_app}
    \mathcal{E}(\n, S)=\int_S \left(\gamma+\frac{\kappa}{2}|\tens D \n|^2\right)dA
\end{equation}
in the setting of a revolution surface $S$ spanning two coaxial rings of radius $r$ placed at distance $2h$. We remember that here $\tens D \n$ stands for the covariant derivative of $\n$.
First of all, we parametrize $S$ by means of $\vect Z \colon [0,2\pi] \times [-h,h] \to \R$ given by 
\[
\vect{Z}(\varphi,x)=(\rho(x) \cos \varphi,\rho(x)\sin \varphi,x),
\]
where $\rho:[-h,h]\to (0,+\infty)$ is a smooth function with $\rho(\pm h)=r$. In this setting, we have 
\[
\vect{Z}_\varphi=(-\rho\sin \varphi, \rho \cos\varphi,0), \qquad \vect{Z}_x=(\rho'\cos \varphi, \rho' \sin\varphi,1).
\]
Then, 
\[
dA=|\vect Z_\varphi \times \vect Z_x|d\varphi dx=\rho\sqrt{1+(\rho')^2}\,d\varphi dx.
\]
Since  
\[
\vect{e}_1=\frac{\vect{Z}_\varphi}{|\vect{Z}_\varphi|}, \qquad 
\vect{e}_2=\frac{\vect{Z}_x}{|\vect{Z}_x|}
\]
form an orthonormal basis on the tangent plane to $S$ and since $\n$ is a unit vector constrained to be tangent to the surface, we can rewrite $\n$ as $\n=\cos \alpha \vect{e}_1+\sin \alpha \vect{e}_2$, where $\alpha=\alpha(x,\varphi) \in [0, \pi]$ is the positive oriented angle formed by the vector $\n$ with $\vect e_1$. Since $|\vect Z_\varphi|=\rho$ and $|\vect Z_x|=\sqrt{1+(\rho')^2}$, we explicitly have 
$$
\n=\frac{\cos \alpha}{\rho}\vect{Z}_\varphi+\frac{\sin \alpha}{\sqrt{1+(\rho')^2}}\vect{Z}_x.
$$
We need to compute the covariant derivative of $\n$, so that from now on we will assume $\alpha$ smooth. Using the notation $x_1=\varphi$ and $x_2=x$, the metric tensor is given by  
\[
(g_{ij})=\begin{pmatrix}
\rho^2 & 0\\
0 & 1+(\rho')^2
\end{pmatrix}
\]
with inverse 
\[
(g^{ij})=\begin{pmatrix}
1/\rho^2 & 0\\
0 & 1/(1+(\rho')^2)
\end{pmatrix}.
\]
The Christoffel symbols of the Levi-Civita connection on $S$  are given by  
$$
\Gamma_{ij}^m=\frac{1}{2}\sum_{k=1}^2g^{km}\left(\frac{\partial g_{jk}}{\partial x_i}+\frac{\partial g_{ki}}{\partial x_j}-\frac{\partial g_{ij}}{\partial x_k}\right), \qquad i,j,m=1,2.
$$
The only non-zero Christoffel symbols are 
$$
\Gamma_{11}^2=-\frac{\rho\rho'}{1+(\rho')^2}, \qquad \Gamma_{12}^1=\Gamma_{21}^1=\frac{\rho'}{\rho}, \qquad \Gamma_{22}^2=\frac{\rho'\rho''}{1+(\rho')^2}.
$$
Now, if $\vect X=X^1\vect Z_\varphi+X^2 \vect Z_x$ the covariant derivative of $\vect X$ is the tensor given by 
\[
(\tens D \vect X)_j^k=\frac{\partial X^k}{\partial x_j}+\sum_{i=1}^2\Gamma_{ij}^kX^i, \qquad j,k=1,2.
\]
As a consequence we find
\[
\begin{aligned}
    (\tens D \n)_1^1&=-\frac{\alpha_\varphi\sin\alpha}{\rho}+\frac{\rho'\sin\alpha}{\rho\sqrt{1+(\rho')^2}}, \qquad &&(\tens D \n)_1^2=\frac{\alpha_\varphi\cos\alpha}{\sqrt{1+(\rho')^2}}-\frac{\rho'\cos\alpha}{1+(\rho')^2}, \\
(\tens D \n)_2^1&=-\frac{\alpha_x\sin\alpha}{\rho}, \qquad &&(\tens D \n)_2^2=\frac{\alpha_x\cos\alpha}{\sqrt{1+(\rho')^2}}.
\end{aligned}
\]
Finally, we get  
\[
|\tens D \n|^2=(\tens D \n)_\ell^j(\tens D \n)_m^hg_{jh}g^{\ell m}=\frac{\alpha_x^2}{1 + (\rho')^2} + \frac{\alpha_\varphi^2}{\rho^2}-\frac{2\rho'\alpha_\varphi}{\rho^2 \sqrt{1 + (\rho')^2}} +\frac{(\rho')^2}{\rho^2 (1 +(\rho')^2)}.
\]
Therefore, if we use Fubini's Theorem, the energy functional \eqref{funz_app} takes the form 
\[
\begin{aligned}
 \mathcal{E}(\n, S)&=\int_{-h}^h\int_0^{2\pi}\left(\gamma\rho\sqrt{1+(\rho')^2}+\frac{\kappa}{2}\left(\frac{\alpha_x^2}{1 + (\rho')^2} + \frac{\alpha_\varphi^2}{\rho^2}-\frac{2\rho'\alpha_\varphi}{\rho^2 \sqrt{1 + (\rho')^2}} +\frac{(\rho')^2}{\rho^2 (1 +(\rho')^2)}\right)\rho\sqrt{1+(\rho')^2}\right)\,d\varphi dx\\
 &=I_1+I_2+I_3+I_4
 \end{aligned}
 \]
 where 
\begin{equation}
    \label{eq:simpl_funzionale}
    \begin{aligned}
 I_1&=2\pi\gamma\int_{-h}^h\left(\rho\sqrt{1+(\rho')^2}+\frac{\kappa}{2\gamma}\frac{(\rho')^2}{\rho \sqrt{1 +(\rho')^2}}\right)\,dx, \qquad &&I_2=\frac{\kappa}{2}\int_{-h}^h\frac{\rho}{\sqrt{1 + (\rho')^2}}\int_0^{2\pi}\alpha_x^2\,d\varphi dx,\\
   I_3&=\frac{\kappa}{2}\int_{-h}^h\frac{\sqrt{1+(\rho')^2}}{\rho}\int_0^{2\pi}\alpha_\varphi^2\,d\varphi dx, \qquad &&I_4=-\frac{\kappa}{2}\int_{-h}^h\frac{2\rho'}{\rho}\int_0^{2\pi}\alpha_\varphi\,d\varphi dx. 
\end{aligned}
\end{equation}
First of all, $I_4=0$: $\n$ is smooth, hence $\alpha(x,0)=\alpha(x,2\pi)$ for any $x\in [-h,h]$. Since $\alpha$ is not subjected to any boundary condition, in order to minimize $\mathcal E$ it is therefore convenient to choose $\alpha_x=\alpha_\varphi=0$ and minimize $I_1$, which is, up to $2\pi\gamma$, the functional \eqref{energy_functional}, having defined $c := \frac{\kappa}{2 \gamma}$
\end{appendices}

\bigskip

\section*{Acknowledgements}
The authors thank Paolo Tilli for his fundamental advice in proving Proposition \ref{lemma_tilli}. Moreover, the authors thank Giuseppe Buttazzo and Marco Degiovanni for fruitful discussions. GB acknowledges the MIUR Excellence Department Project awarded to the Department of Mathematics, University of Pisa, CUP I57G22000700001. 
CL and AM acknowledge Gruppo Nazionale per la Fisica Matematica (GNFM) of Istituto Nazionale di Alta Matematica (INdAM).

\section*{Funding}
\begin{itemize}
\item GB is supported by European Research Council (ERC), under the European Union's Horizon 2020 research and innovation program, through the project ERC VAREG - {\em Variational approach to the regularity of the free boundaries} (grant agreement No. 853404).
\item GB and LL are supported by Gruppo Nazionale per l'Analisi Matematica, la Probabilit\`a e le loro Applicazioni (GNAMPA) of Istituto Nazionale di Alta Matematica (INdAM) through the INdAM-GNAMPA project 2024 CUP E53C23001670001.
\item LL is supported by European Union - Next Generation EU - Research Project Prin2022 PNRR of National Relevance P2022KHFNB granted by the Italian MUR.
\item CL is suppported by the MICS (Made in Italy – Circular and Sustainable) Extended Partnership and received funding from the European Union Next-Generation EU (PIANO NAZIONALE DI RIPRESA E RESILIENZA (PNRR) – MISSIONE 4 COMPONENTE 2, INVESTIMENTO 1.3 – D.D. 1551.11-10-2022, PE00000004). 
\end{itemize}

\section*{Conflict of interest statement}
The authors declare that they have no known competing financial interests or personal relationships that could have appeared to influence the work reported in this paper

\printbibliography

@article {bm1991,
    AUTHOR = {Botteron, Bernard and Marcellini, Paolo},
     TITLE = {A general approach to the existence of minimizers of
              one-dimensional noncoercive integrals of the calculus of
              variations},
   JOURNAL = {Ann. Inst. H. Poincar\'{e} C Anal. Non Lin\'{e}aire},
  FJOURNAL = {Annales de l'Institut Henri Poincar\'{e} C. Analyse Non
              Lin\'{e}aire},
    VOLUME = {8},
      YEAR = {1991},
    NUMBER = {2},
     PAGES = {197--223},
}

@article{bevilacqua2020dimensional,
  title={Dimensional reduction of the Kirchhoff-Plateau problem},
  author={Bevilacqua, Giulia and Lussardi, Luca and Marzocchi, Alfredo},
  journal={Journal of Elasticity},
  volume={140},
  pages={135--148},
  year={2020},
  publisher={Springer}
}

@article{david2014chapter,
  title={Chapter Six. Should We Solve Plateau’s Problem Again?},
  author={David, Guy},
  journal={Advances in Analysis},
  pages={108--145},
  year={2014},
  publisher={Princeton University Press}
}

@article {greco2012,
    AUTHOR = {Greco, Antonio},
     TITLE = {Minimization of non-coercive integrals by means of convex
              rearrangement},
   JOURNAL = {Adv. Calc. Var.},
  FJOURNAL = {Advances in Calculus of Variations},
    VOLUME = {5},
      YEAR = {2012},
    NUMBER = {2},
     PAGES = {231--249},
}

@book{Isenberg1978TheSO,
  title={The Science of Soap Films and Soap Bubbles},
  author={Cyril Isenberg},
  year={1978},
  publisher={Dover Publications}
}

@article{giomi2012hyperbolic,
  title={Hyperbolic interfaces},
  author={Giomi, Luca},
  journal={Physical Review Letters},
  volume={109},
  number={13},
  pages={136101},
  year={2012},
  publisher={APS}
}

@article{napoli2012surface,
  title={Surface free energies for nematic shells},
  author={Napoli, Gaetano and Vergori, Luigi},
  journal={Physical Review E},
  volume={85},
  number={6},
  pages={061701},
  year={2012},
  publisher={APS}
}

@article{napoli2018influence,
  title={Influence of the extrinsic curvature on two-dimensional nematic films},
  author={Napoli, Gaetano and Vergori, Luigi},
  journal={Physical Review E},
  volume={97},
  number={5},
  pages={052705},
  year={2018},
  publisher={APS}
}

@book{Virga1994,
  title={Variational Theories for Liquid Crystals},
  author={Epifanio G. Virga},
  year={1994},
  publisher={Chapman and Hall}
}

@article{gurtin1975continuum,
  title={A continuum theory of elastic material surfaces},
  author={Gurtin, Morton E and Ian Murdoch, A},
  journal={Archive for rational mechanics and analysis},
  volume={57},
  pages={291--323},
  year={1975},
  publisher={Springer}
}

@article{palmer2021minimal,
  title={Minimal surfaces with elastic and partially elastic boundary},
  author={Palmer, Bennett and P{\'a}mpano, {\'A}lvaro},
  journal={Proceedings of the Royal Society of Edinburgh Section A: Mathematics},
  volume={151},
  number={4},
  pages={1225--1246},
  year={2021},
  publisher={Royal Society of Edinburgh Scotland Foundation}
}

@article{giusteri2017solution,
  title={Solution of the Kirchhoff--Plateau problem},
  author={Giusteri, Giulio G and Lussardi, Luca and Fried, Eliot},
  journal={Journal of Nonlinear Science},
  volume={27},
  pages={1043--1063},
  year={2017},
  publisher={Springer}
}

@article{bevilacqua2019soap,
  title={Soap film spanning an elastic link},
  author={Bevilacqua, Giulia and Lussardi, Luca and Marzocchi, Alfredo},
  journal={Quarterly of Applied Mathematics},
  volume={77},
  number={3},
  pages={507--523},
  year={2019}
}

@inproceedings{lin1991static,
  title={Static and moving defects in liquid crystals},
  author={Lin, Fang-Hua},
  booktitle={Proceedings of the international congress of mathematicians (Kyoto, 1990)},
  pages={1165--1171},
  year={1991},
  organization={Math. Soc. Japan}
}
\end{document}